\numberwithin{equation}{section}
 \newcommand{\comment}[1]{}  
\numberwithin{equation}{section}
\newcommand{\C}{\mathbb{C}}
\newcommand{\N}{\mathbb{N}} 
\newcommand{\Z}{\mathbb{Z}}
\newcommand{\R}{\mathbb{R}}
\newcommand{\T}{\mathbb{T}}
\theoremstyle{claim}
\newcommand{\set}[1]{\mathopen{}\left\{#1\mathclose{}\right\}}
\newcommand{\bigset}[1]{\bigl\{ #1 \bigr\}}
\newcommand{\abs}[1]{\mathopen{}\left| #1\mathclose{}\right|}
\newcommand{\biggabs}[1]{\biggl| #1 \biggr|}
\newcommand{\sqbrac}[1]{\mathopen{}\left[ #1 \mathclose{}\right]}
\newcommand{\bigsqbrac}[1]{\bigl[ #1 \bigr]}
\newcommand{\ceil}[1]{\mathopen{}\left\lceil #1 \mathclose{}\right\rceil}
\newcommand{\floor}[1]{\mathopen{}\left\lfloor #1 \right\rfloor}
\newcommand{\brac}[1]{\mathopen{}\left( #1 \mathclose{}\right)}
\newcommand{\bigbrac}[1]{\bigl( #1 \bigr)}
\newcommand{\biggbrac}[1]{\biggl( #1 \biggr)}
\newcommand{\norm}[1]{\mathopen{}\left\| #1\mathclose{}\right\|}
\newcommand{\bignorm}[1]{\big\| #1 \big\|}
\newcommand{\recip}[1]{\frac{1}{#1}}
\newcommand{\trecip}[1]{\tfrac{1}{#1}}
\newcommand{\E}{\mathbb{E}}
\newcommand{\meas}{\mathrm{meas}}
\newcommand{\intd}{\mathrm{d}}
\newcommand{\supp}{\mathrm{supp}}
\newcommand{\eps}{\varepsilon}
\let\@@pmod\pmod
\DeclareRobustCommand{\pmod}{\@ifstar\@pmods\@@pmod}
\def\@pmods#1{\mkern4mu({\operator@font mod}\mkern 6mu#1)}
\newtheorem{theorem}{Theorem}[section]
\newtheorem{corollary}[theorem]{Corollary}
\newtheorem{proposition}[theorem]{Proposition}
\newtheorem{lemma}[theorem]{Lemma}
\theoremstyle{definition}
\newtheorem{definition}[theorem]{Definition}
\newtheorem{remark}[theorem]{Remark}
\numberwithin{theorem}{section}
\renewcommand{\leq}{\leqslant}
\renewcommand{\geq}{\geqslant}
\begin{document}

\title[Extremal Sidon Sets]
{Extremal Sidon Sets are Fourier Uniform, with Applications to Partition Regularity}

\author{Miquel Ortega}
\address{Universitat Politècnica de Catalunya}
\email{miquel.ortega.sanchez-colomer@estudiantat.upc.edu}

\author{Sean Prendiville}
\address{Department of Mathematics and Statistics\\
Lancaster University
}
\email{s.prendiville@lancaster.ac.uk}



\date{\today}

\begin{abstract}
Generalising results of Erd\H{o}s-Freud and Lindstr\"om, we prove that the largest Sidon subset of a bounded interval of integers is equidistributed in Bohr neighbourhoods. We establish this by showing that extremal Sidon sets are Fourier-pseudorandom, in that they have no large non-trivial Fourier coefficients. As a further application we deduce that, for any partition regular equation in five or more variables, every finite colouring of an extremal Sidon set has a monochromatic solution.
\end{abstract}

\maketitle

\setcounter{tocdepth}{1}
\tableofcontents

\section{Introduction}

A subset $S$ of an additively-written abelian group is \emph{Sidon} if every non-zero $x$ has at most one representation as a difference $x = s_1-s_2$ with $s_1, s_2 \in S$. There have been a number of works investigating the size of Sidon sets $S \subset \Z$. Erd\H{os} and Tur\'an \cite{ErdosTuranProblem} established the well-known bound\footnote{For a proof see Appendix \ref{sec:SidonSize}.}
\begin{equation}\label{eq:SidonSize}
|(n, n+N] \cap S| \leq N^{1/2} + O(N^{1/4}).
\end{equation}
A corresponding lower bound was found by Singer \cite{SingerTheorem}, who constructed a Sidon set $S \subset[N] := \set{1,2, \dots, N}$ of size
\begin{equation}\label{eq:Singer}
|S| \geq N^{1/2} - O(N^{\alpha/2}),
\end{equation}
where $\alpha$ is a real number for which there is always a prime in $[x-x^\alpha, x]$ when $x$ is large (the current record \cite{BHPDifference} is $\alpha = 0.525$). 

Informally, we call a Sidon set $S \subset [N]$ \emph{extremal} if its size is `close' to $N^{1/2}$ in some sense. There has been speculation on the (im)possibility of characterising such sets \cite{OBryantComplete, GowersDenseSidon, forey2021algebraic, eberhard2021apparent, eberhardmanners2021apparent}. We contribute to this discussion by showing that extremal Sidon sets are Fourier pseudorandom, by which we mean that (after appropriate renormalisation) their Fourier transform behaves essentially like the Fourier transform of the ambient interval.
\begin{definition}[Fourier transform]\label{fourier transform}
For $f : \Z \to \C$ with finite support define $\hat{f} : \T \to \C$ by
$$
\hat{f}(\alpha) := \sum_{n\in \Z} f(n) e(\alpha  n).
$$
Here $e(\beta)$ stands for $e^{2\pi i \beta}$.
\end{definition}
\begin{theorem}[Fourier uniformity]\label{thm:sidon_pseudorandom}
Let $S \subset [N]$ be a Sidon set. Then
    \begin{equation}\label{eq:fourier-unif}
        \left\|\hat{1}_{S} - \tfrac{|S|}{N}\hat{1}_{[N]}\right\|_{\infty}
        \ll N^{1/2}\brac{\abs{\tfrac{|S|}{N^{1/2}}-1}+N^{-1/6}}^{1/2}.
    \end{equation}
\end{theorem}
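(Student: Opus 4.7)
The plan is to work in physical space and exploit the two Sidon bounds available: the group-level $r_{S-S}(x)\le 1$ for $x\ne 0$ and, for any interval $I\subset[N]$, the interval-level estimate $|S\cap I|\le |I|^{1/2}+O(|I|^{1/4})$ coming from \eqref{eq:SidonSize} applied to the Sidon set $S\cap I$. Write $h := 1_S - \sigma 1_{[N]}$ with $\sigma := |S|/N$, and let $\alpha_0$ attain $M := \|\hat h\|_\infty$. Starting from $M^2 = \widehat{h*h^{-}}(\alpha_0)$ where $h^{-}(y) := h(-y)$, bilinearity gives
\[
(h*h^{-})(x) = r_{S-S}(x) - \sigma\bigl[r_{S,[N]}(x) + r_{S,[N]}(-x)\bigr] + \sigma^{2} r_{[N]-[N]}(x),
\]
with $r_{A,B}(x) := |A\cap(B+x)|$. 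The Sidon hypothesis forces $r_{S-S}$ to equal $|S|$ at the origin and at most $1$ elsewhere, while the cross terms are controlled by the initial and terminal interval discrepancies $\delta_L(y) := |S\cap[1,y]| - \sigma y$ and $\delta_R(y) := |S\cap[N-y+1,N]| - \sigma y$.

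These discrepancies are tied back to $\hat h$ through the Parseval identity
\[
\int_{\T} |\hat h(\alpha)|^{2}\, |\hat{1}_{[N]}(\alpha)|^{2}\, d\alpha = \|h * 1_{[N]}\|_{\ell^{2}}^{2} = \sum_{y=0}^{N-1}\bigl(\delta_{L}(y)^{2} + \delta_{R}(y)^{2}\bigr).
\]
Applying \eqref{eq:SidonSize} to both $[1,y]$ and its complement $[y+1,N]$, and using the near-extremality $\bigl||S|/N^{1/2}-1\bigr|\le \epsilon$, pinches $\delta_L, \delta_R$ between matching upper and lower bounds, yielding a square-sum control of the right-hand side. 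To pass from this weighted $L^{2}$ integral to a pointwise bound on $M$, use the Bernstein-type estimate $\|\hat h'\|_\infty \ll N|S|$: on the window $\alpha\in[\alpha_0-\eta,\alpha_0+\eta]$ with $\eta\asymp M/(N|S|)$ one has $|\hat h(\alpha)|^2 \gtrsim M^2$ and $|\hat{1}_{[N]}(\alpha)|^2 \asymp |\hat{1}_{[N]}(\alpha_0)|^2$, so the left-hand side is at least of order $M^3 |\hat{1}_{[N]}(\alpha_0)|^2/(N|S|)$, and rearranging converts the integral bound into an inequality for $M$.

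The main obstacle is the ``minor-arc'' regime where $|\hat{1}_{[N]}(\alpha_0)|$ is small, since the weighted inequality then degenerates. Here I would use the complementary identity $|\hat{1}_S(\alpha_0)|^{2} = |S| + \hat{1}_{D}(\alpha_0)$ for $D := (S-S)\setminus\{0\}$, together with a pseudorandomness estimate for $D$ (which has density close to $1/2$ in $[-N,N]$ when $S$ is extremal), to bound $|\hat{1}_S(\alpha_0)|$ directly. Balancing the two regimes through an appropriate threshold on $|\hat{1}_{[N]}(\alpha_0)|$ is what I expect to produce the exponent $N^{-1/6}$ in \eqref{eq:fourier-unif}.
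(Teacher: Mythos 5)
Your raw ingredients are the right ones (the Sidon bound $r_{S-S}\le 1$ off zero, the interval bound \eqref{eq:SidonSize}, near-extremality), but the architecture fails in both of your regimes, and the missing move in each case is the same: Fej\'er/van der Corput smoothing of the autocorrelation. \textbf{Major arcs.} The claimed ``pinching'' of $\delta_L(y)$ gives nothing when $y\asymp N$: the upper bound $|S\cap[1,y]|\le y^{1/2}+O(y^{1/4})$ and the lower bound $|S\cap[1,y]|\ge |S|-(N-y)^{1/2}-O((N-y)^{1/4})$ differ by roughly $y^{1/2}+(N-y)^{1/2}-N^{1/2}$, which at $y=N/2$ is $(\sqrt2-1)N^{1/2}$. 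So \eqref{eq:SidonSize} only yields $|\delta_L(y)|\ll N^{1/2}$ in the bulk, hence $\|h*1_{[N]}\|_2^2\ll N^2$, and your localization (which in any case requires $\|\alpha_0\|\ll 1/N$, since $\hat 1_{[N]}$ varies by $O(N)$ and has zeros on any window of width $\asymp 1/N$) returns only the trivial $M\ll N^{1/2}$. To do better you would need $\delta_L(y)\ll N^{3/8}$ on average, i.e.\ the Erd\H{o}s--Freud/Cilleruelo equidistribution theorem --- which the paper \emph{deduces from} Theorem \ref{thm:sidon_pseudorandom} (Corollary \ref{cor:EquiIntervals}), so taking it as input is circular. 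The bound \eqref{eq:SidonSize} is only a genuine saving for \emph{short} intervals, and that is precisely how the paper uses it: for shifts $|m|\le H=N^{2/3}$ it gives $|S\cap I_m|\ll |m|^{1/2}\ll N^{1/3}$, an acceptable error against the main term $|S|^2/N$.

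\textbf{Minor arcs.} The identity $|\hat 1_S(\alpha)|^2=|S|+\hat 1_D(\alpha)$ with $D=(S-S)\setminus\{0\}$ is correct, but it shows that a pointwise ``pseudorandomness estimate for $D$'' is \emph{equivalent} to the theorem you are proving; you offer no independent source for it, and density $1/2$ alone allows $|\hat 1_D(\alpha_0)|$ as large as $|D|\asymp N$, again giving only $|\hat 1_S(\alpha_0)|\ll N^{1/2}$. The paper's resolution is to never ask for pointwise control of $\hat 1_D$: applying \eqref{eq:vandercorput} to $f(x)e(\alpha_0 x)$ bounds $|\hat f(\alpha_0)|^2$ by $(N+H)\sum_m\mu_H(m)\bigl|\sum_x f(x)\overline{f(x+m)}\bigr|$, so one only needs the \emph{Fej\'er-weighted} mass $\sum_m\mu_H(m)1_D(m)$ to be close to $1$. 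That is Lemma \ref{lem:sidondif_sum_lower_bound}, proved by writing $\sum_m (1_S*1_{-S})(m)\mu_H(m)=\lfloor H\rfloor^{-2}\sum_m (1_S*1_{[H]})(m)^2$ and applying Cauchy--Schwarz; combined with $1_D\le 1$ (the Sidon property) and the short-interval bound for the cross terms, then optimizing $H=N^{2/3}$, this yields \eqref{eq:fourier-unif}. If you reorganize your computation of $h*h^-$ around this smoothed correlation rather than the unsmoothed $\widehat{h*h^-}(\alpha_0)$, you recover the paper's proof; as written, both halves of your argument bottom out at the trivial bound.
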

\begin{remark}
The exponent $-1/6$ appearing in \eqref{eq:fourier-unif} can be improved to $-1/4$. This is accomplished by replacing a use of \eqref{eq:SidonSize} in our proof with a sharper estimate of Cilleruelo \cite{CillerueloGaps}; see Theorem \ref{thm:improve-fourier-unif}.
\end{remark}
We note that on combining the Erd\H{o}s-Tur\'an upper bound \eqref{eq:SidonSize} with Singer's lower bound \eqref{eq:Singer}, the largest Sidon subset $S$ of $[N]$ satisfies $\abs{|S|-N^{1/2}} \ll N^{21/80}$, in which case the $N^{-1/6}$ error term dominates \eqref{eq:fourier-unif}. 
\begin{corollary}
The largest Sidon subset $S \subset [N]$ satisfies
\begin{equation}\label{eq:fourier-unif-cor}
       \bignorm{\hat{1}_{S} - \tfrac{|S|}{N}\hat{1}_{[N]}}_{\infty}
        \ll  \bignorm{\hat{1}_{S} }_{\infty}N^{ - \frac{1}{12}}.
\end{equation}
\end{corollary}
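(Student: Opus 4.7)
The plan is to feed the bounds on the size of the extremal Sidon set directly into Theorem \ref{thm:sidon_pseudorandom} and compare the two error terms there.

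First I would combine Erd\H{o}s--Tur\'an \eqref{eq:SidonSize} with Singer \eqref{eq:Singer} (using $\alpha = 0.525$) to get
\[
\bigabs{|S| - N^{1/2}} \ll N^{21/80}
\]
for the largest Sidon subset $S \subset [N]$. Dividing by $N^{1/2}$ yields
\[
\Bigabs{\tfrac{|S|}{N^{1/2}} - 1} \ll N^{-19/80},
\]
which, since $19/80 > 1/6$, is dominated by the $N^{-1/6}$ term inside the bracket of \eqref{eq:fourier-unif}. Substituting into Theorem \ref{thm:sidon_pseudorandom} therefore gives
\[
\bignorm{\hat{1}_S - \tfrac{|S|}{N}\hat{1}_{[N]}}_{\infty} \ll N^{1/2}\bigbrac{N^{-1/6}}^{1/2} = N^{5/12}.
\]

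It remains to convert the $N^{5/12}$ into the form $\norm{\hat{1}_S}_\infty N^{-1/12}$. For this I would use the trivial lower bound $\norm{\hat{1}_S}_\infty \geq \hat{1}_S(0) = |S|$, together with the Singer bound $|S| \geq N^{1/2} - O(N^{21/80}) \gg N^{1/2}$. This gives $\norm{\hat{1}_S}_\infty N^{-1/12} \gg N^{1/2 - 1/12} = N^{5/12}$, which absorbs the bound obtained above and yields \eqref{eq:fourier-unif-cor}.

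There is no real obstacle here; the corollary is essentially a bookkeeping exercise, the main point being the numerical verification that $N^{21/80}/N^{1/2} \ll N^{-1/6}$ so that the ``size defect'' term in \eqref{eq:fourier-unif} is negligible compared with the pseudorandomness error, and the observation that $\norm{\hat{1}_S}_\infty \asymp N^{1/2}$ for extremal Sidon sets.
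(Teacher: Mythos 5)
Your argument is correct and is exactly the route the paper intends: the preceding remark in the paper supplies the bound $\abs{|S|-N^{1/2}} \ll N^{21/80}$ from Erd\H{o}s--Tur\'an and Singer, notes that $N^{-19/80}$ is dominated by $N^{-1/6}$ inside the bracket of \eqref{eq:fourier-unif}, and then the conversion to the form $\norm{\hat{1}_S}_\infty N^{-1/12}$ is the trivial observation $\norm{\hat{1}_S}_\infty \geq \hat{1}_S(0) = |S| \gg N^{1/2}$. No gap, and nothing different from the paper.
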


We are not the first to investigate the uniformity of extremal Sidon sets. Erd\H{o}s and Freud \cite{ErdosFreudSums} established that such sets are equidistributed in short intervals\footnote{There have since been quantitative improvements in this result, see \cite{CillerueloGaps}.}, whilst Lindstr\"om \cite{LindstromWell} proved equidistribution in arithmetic progressions\footnote{For quantitative improvements, see \cite{KolountzakisUniform}.}. We are able to re-prove (quantitatively weaker) versions of these results as a consequence of Theorem \ref{thm:sidon_pseudorandom}.
\begin{corollary}[Equidistribution in short intervals]\label{cor:EquiIntervals}
Let $I \subset [0,1]$ be an interval and $S \subset [N]$ a Sidon set of size\footnote{One could replace the factor $1/100$ with any positive absolute constant. This assumption makes our conclusions notationally simpler, and is always satisfied in the range of interest, when $|S| = N^{1/2}(1+o(1))$ with $N$ large.} $|S| \geq \recip{100} N^{1/2}$. Then we have the asymptotic\footnote{See \eqref{eq:expectation} for the definition of  $\E$.}
\begin{equation}\label{eq:EquiIntervals}
\E_{x \in S} 1_{I}(x/N)  = \meas(I) + 
O_\eps\brac{N^{\eps}\brac{\abs{\tfrac{|S|}{N^{1/2}}-1} + N^{-1/6}}^{1/2}.}
\end{equation}
\end{corollary}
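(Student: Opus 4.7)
The plan is to derive the corollary from Theorem~\ref{thm:sidon_pseudorandom} via a standard Fourier-duality argument, pitting the $L^\infty$ uniformity bound against the $L^1$ norm of the Fourier transform of an interval. First, since $S \subset [N]$, one has $1_S(x) 1_I(x/N) = 1_S(x) 1_J(x)$ for the integer interval $J := \set{x \in [N] : x/N \in I}$, whose cardinality satisfies $|J| = N\meas(I) + O(1)$. Parseval's identity then expresses the sum of interest as
\[
    \sum_{x \in S} 1_I(x/N) = \int_\T \hat{1}_S(\alpha)\,\overline{\hat{1}_J(\alpha)}\,d\alpha.
\]

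The key step is to split $\hat{1}_S = \tfrac{|S|}{N}\hat{1}_{[N]} + \bigbrac{\hat{1}_S - \tfrac{|S|}{N}\hat{1}_{[N]}}$. The first summand, again by Parseval, contributes $\tfrac{|S|}{N}|J| = |S|\meas(I) + O(|S|/N)$, which is the claimed main term. For the remaining piece, H\"older's inequality gives
\[
    \abs{\int_\T \bigbrac{\hat{1}_S - \tfrac{|S|}{N}\hat{1}_{[N]}}(\alpha)\,\overline{\hat{1}_J(\alpha)}\,d\alpha} \leq \bignorm{\hat{1}_S - \tfrac{|S|}{N}\hat{1}_{[N]}}_\infty \cdot \norm{\hat{1}_J}_1.
\]
The classical estimate $\|\hat{1}_J\|_1 \ll \log(|J|+2) \ll \log N$ for a Dirichlet-type kernel, combined with Theorem~\ref{thm:sidon_pseudorandom}, bounds the error in the counting identity by $\ll N^{1/2}\log N \cdot \bigbrac{\abs{\tfrac{|S|}{N^{1/2}}-1} + N^{-1/6}}^{1/2}$.

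Dividing through by $|S| \gg N^{1/2}$ yields \eqref{eq:EquiIntervals}, after absorbing the $\log N$ into the factor $N^\eps$. There is no substantial obstacle: the difficult work is performed by Theorem~\ref{thm:sidon_pseudorandom}, and this corollary reduces to routine Fourier duality. The only genuine cost is the logarithmic loss from $\|\hat{1}_J\|_1$, which is precisely why the statement carries an $N^\eps$ rather than a cleaner bound; one could shave this by replacing $1_J$ with a smoothed approximant (for example of Fej\'er type) at the price of controlling the smoothing error separately.
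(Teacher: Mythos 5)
Your argument is correct and is essentially the paper's own proof: the paper routes through Theorem \ref{thm:equi-prog} (the same statement for a general arithmetic progression $P$), established by exactly your decomposition $\hat{1}_S = \tfrac{|S|}{N}\hat{1}_{[N]} + (\hat{1}_S - \tfrac{|S|}{N}\hat{1}_{[N]})$, the duality/H\"older step, and the $L^1$ bound $\|\hat{1}_P\|_1 \ll \log|P|$ of Lemma \ref{lem:bound_interval}. The only difference is that you specialise to the integer interval $J$ directly rather than stating the progression version first.
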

\begin{corollary}[Equidistribution in residue classes]\label{cor:EquiAPs}
For any congruence class $a\pmod q$ and Sidon set $S \subset [N]$ of size $|S| \geq \recip{100} N^{1/2}$ we have the asymptotic
\begin{equation}\label{eq:EquiAPs}
\E_{x \in S} 1_{q\cdot \Z+a}(x)  = q^{-1} + O_\eps\brac{N^{\eps}\brac{\abs{\tfrac{|S|}{N^{1/2}}-1} + N^{-1/6}}^{1/2}.}
\end{equation}
\end{corollary}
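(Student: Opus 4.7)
The plan is to use Fourier inversion on $\Z/q\Z$ to reduce the equidistribution statement to the Fourier uniformity established in Theorem \ref{thm:sidon_pseudorandom}. First I would write the indicator of the residue class as a character sum: for every $x \in \Z$,
\begin{equation*}
1_{q\Z + a}(x) = \recip{q} \sum_{r=0}^{q-1} e(-ra/q)\,e(rx/q).
\end{equation*}
Pairing this identity against the signed measure $1_S - \tfrac{|S|}{N}1_{[N]}$ yields
\begin{equation*}
\sum_{x \in S} 1_{q\Z+a}(x) - \tfrac{|S|}{N}\bigabs{[N]\cap (q\Z+a)} = \recip{q}\sum_{r=0}^{q-1} e(-ra/q)\sqbrac{\hat{1}_S(r/q) - \tfrac{|S|}{N}\hat{1}_{[N]}(r/q)}.
\end{equation*}
The crucial observation is that the $r=0$ term vanishes, since $\hat{1}_S(0) = |S|$ and $\hat{1}_{[N]}(0) = N$, so only nontrivial characters contribute.

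The triangle inequality then bounds the right-hand side by $\tfrac{q-1}{q}\bignorm{\hat{1}_S - \tfrac{|S|}{N}\hat{1}_{[N]}}_\infty$, uniformly in $q$ and $a$. Dividing through by $|S|$, the left side becomes $\E_{x\in S}1_{q\Z+a}(x) - \tfrac{1}{N}\bigabs{[N]\cap(q\Z+a)}$, and the standard count $\bigabs{[N]\cap(q\Z+a)} = N/q + O(1)$ converts the subtracted quantity into $q^{-1} + O(1/N)$, producing the desired main term.

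To conclude, I would invoke Theorem \ref{thm:sidon_pseudorandom} together with the hypothesis $|S| \geq \recip{100} N^{1/2}$, which gives
\begin{equation*}
|S|^{-1}\bignorm{\hat{1}_S - \tfrac{|S|}{N}\hat{1}_{[N]}}_\infty \ll \tfrac{N^{1/2}}{|S|}\brac{\bigabs{\tfrac{|S|}{N^{1/2}} - 1} + N^{-1/6}}^{1/2} \ll \brac{\bigabs{\tfrac{|S|}{N^{1/2}} - 1} + N^{-1/6}}^{1/2},
\end{equation*}
which absorbs the $O(1/N)$ error and is in fact stronger than the stated bound (the $N^\eps$ factor is not needed here).

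There is no real obstacle in this argument; the heavy lifting has already been done in Theorem \ref{thm:sidon_pseudorandom}. Unlike the interval case of Corollary \ref{cor:EquiIntervals}, where approximating $1_I$ by trigonometric polynomials introduces a logarithmic or $N^\eps$ loss from summing Fourier coefficients, here the character expansion on $\Z/q\Z$ involves only $q$ frequencies and the pointwise estimate is divided by $q$, so the argument is clean and uniform in both $a$ and $q$.
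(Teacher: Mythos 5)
Your proof is correct, and it takes a genuinely different (and in this instance sharper) route than the paper. The paper deduces Corollary \ref{cor:EquiAPs} as a special case of Theorem \ref{thm:equi-prog}, which handles an arbitrary finite arithmetic progression $P$ by pairing $\hat{1}_{P\cap[N]}$ against $\hat{1}_S - \tfrac{|S|}{N}\hat{1}_{[N]}$ on all of $\T$ and invoking the $L^1$ bound $\int_\T |\hat{1}_P| \ll \log |P|$ (Lemma \ref{lem:bound_interval}, a cheap Erd\H{o}s--Tur\'an inequality); the $\log$ is the source of the $N^\eps$ in \eqref{eq:EquiAPs}. You instead exploit the fact that $1_{q\Z+a}$ is genuinely periodic, expand it exactly into $q$ additive characters, observe that the $r=0$ frequency cancels the main term, and bound the remaining $q-1$ frequencies by $\tfrac{q-1}{q}\bignorm{\hat{1}_S - \tfrac{|S|}{N}\hat{1}_{[N]}}_\infty$. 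All the steps check out: the orthogonality identity, the vanishing of the $r=0$ term, the count $|[N]\cap(q\Z+a)| = N/q + O(1)$ whose error is dominated by $N^{-1/12}$, and the normalisation using $|S|\geq \recip{100}N^{1/2}$. The trade-off is scope versus sharpness: the paper's argument covers arbitrary progressions (hence also Corollary \ref{cor:EquiIntervals}) in one stroke but pays an $N^\eps$, while your character-sum argument is confined to full residue classes --- a truncated progression inside $[N]$ is not periodic, so the exact cancellation at $r=0$ is lost and the logarithmic loss reappears --- but within that scope it removes the $N^\eps$ entirely and is uniform in $q$ and $a$. This is consistent with the paper's own remark that its residue-class bounds admit improvement by a more refined analysis.
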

Theorem \ref{thm:sidon_pseudorandom} is more general than Corollaries \ref{cor:EquiIntervals} and \ref{cor:EquiAPs}, yielding equidistribution in a wider class of sets. In the following, we write $\T := \R/\Z$ and $\norm{\alpha}_\T:= \min_{n \in \Z} |\alpha - n|$.
\begin{corollary}[Equidistribution in smooth Bohr neighbourhoods]\label{cor:BohrEqui}
Let $F : \T^d \to [0,1]$ have Lipschitz constant $K\geq 1$, in that for any $\alpha, \beta \in \T^d$ we have $$|F(\alpha) - F(\beta)| \leq K\max_j\norm{\alpha_j-\beta_j}_{\T}.$$ Then for any Sidon set $S \subset [N]$ of size $|S| \geq \recip{100} N^{1/2}$ and $\alpha \in \T^d$ we have
$$
\E_{x\in S} F(\alpha x) = \E_{x\in [N]} F(\alpha x) + O\brac{K^{\frac{2}{3}}\brac{\abs{\tfrac{|S|}{N^{1/2}}-1} + N^{-1/6}}^{\frac{1}{8d}}}.
$$
\end{corollary}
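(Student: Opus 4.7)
The plan is to approximate the Lipschitz function $F$ by a trigonometric polynomial and then invoke Theorem \ref{thm:sidon_pseudorandom} one Fourier mode at a time.

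Set $\delta := \bigbrac{\abs{\tfrac{|S|}{N^{1/2}}-1}+N^{-1/6}}^{1/2}$. The first step is a single-frequency estimate: for any $\beta \in \T$, the identity $\E_{x\in A} e(\beta x) = \hat{1}_A(\beta)/|A|$ combined with Theorem \ref{thm:sidon_pseudorandom} and the size hypothesis $|S| \gg N^{1/2}$ yields
$$
\bigabs{\E_{x\in S} e(\beta x) - \E_{x\in [N]} e(\beta x)} = \frac{\bigabs{\hat{1}_S(\beta) - \tfrac{|S|}{N}\hat{1}_{[N]}(\beta)}}{|S|} \ll \delta.
$$

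Second, for a parameter $M \geq 1$ to be chosen, I would approximate $F$ by the trigonometric polynomial $P := F \ast \mathcal{K}_M^{\otimes d}$, where $\mathcal{K}_M$ is the univariate Fejér kernel of order $M$. Writing $P(\theta) = \sum_{\vk \in [-M,M]^d} c_\vk e(\vk \cdot \theta)$, the Lipschitz hypothesis on $F$ together with the concentration of the Fejér kernel give $\norm{F - P}_\infty \ll K/M$ (up to logarithmic factors), while Parseval and Cauchy--Schwarz give the crude Wiener-norm bound $\sum_\vk |c_\vk| \leq (2M+1)^{d/2}\norm{P}_{L^2} \leq (2M+1)^{d/2}$, since $\norm{P}_\infty \leq \norm{F}_\infty \leq 1$. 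Applying the single-frequency estimate at $\beta = \vk \cdot \alpha$ for each non-zero $\vk$ (the $\vk = 0$ mode contributes nothing to the discrepancy) then gives
$$
\bigabs{\E_{x\in S} F(\alpha x) - \E_{x\in [N]} F(\alpha x)} \leq 2\norm{F - P}_\infty + \delta \sum_{\vk} |c_\vk| \ll \frac{K}{M} + \delta M^{d/2}.
$$

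Finally, I would optimise $M$. Balancing the two contributions at $M \sim (K/\delta)^{2/(d+2)}$ produces an error of order $K^{d/(d+2)}\delta^{2/(d+2)}$, from which the stated bound $K^{2/3}\delta^{1/(4d)}$ follows after slackening (this is wasteful but has a cosmetic uniformity in $d$). The main potential obstacle in this route is that the Cauchy--Schwarz bound on $\sum_\vk |c_\vk|$ does not exploit the Lipschitz smoothness of $F$: a sharper bound on this Wiener-type norm — using for instance the decay $|\hat F(\vk)| \ll K/\max_j|k_j|$ — would tighten the exponents, but the crude estimate is already enough for the stated corollary.
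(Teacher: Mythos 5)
Your argument takes the paper's route exactly: approximate $F$ by a degree-$M$ product Fej\'er mean $P = F*\mathcal{K}_M^{\otimes d}$, apply the single-frequency consequence of Theorem~\ref{thm:sidon_pseudorandom} to each mode, and optimise $M$. Your intermediate estimates are in fact slightly sharper than the paper's: the Cauchy--Schwarz/Parseval bound $\sum_{\vk}|c_{\vk}|\leq(2M+1)^{d/2}$ improves the paper's trivial $\leq(2M+1)^d$, and $\norm{F-P}_\infty\ll dK\brac{\log M}/M$ beats the $\ll K^{2/3}M^{-1/3}$ proved in Lemma~\ref{lem:trig_approx}; both are valid.

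The one step that is not actually a ``slackening'' and needs a few extra lines is the last. Precisely because your inputs are sharper, the balanced bound comes out as $K^{d/(d+2)}\delta^{2/(d+2)}$, and since $d/(d+2)>2/3$ once $d\geq 5$ this is \emph{not} a termwise weakening of the target $K^{2/3}\delta^{1/(4d)}$ when $K$ is large; moreover the $\log M$ from the Fej\'er approximation has been silently dropped. Both issues disappear once you note that the conclusion is vacuous unless $K^{2/3}\delta^{1/(4d)}\leq 1$, i.e.\ $\delta\leq K^{-8d/3}$: under that restriction one checks $K^{d/(d+2)}\delta^{2/(d+2)}\log M\ll K^{2/3}\delta^{1/(4d)}$, because $2/(d+2)-1/(4d)>0$ leaves a spare power $\delta^{c}$, $c>0$, which absorbs the logarithm and (via $\delta\leq K^{-8d/3}$) the surplus power of $K$. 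The paper's cruder inputs lead it to balance at the gentler exponents $K^{2d/(3d+1)}\delta^{1/(3d+1)}$, which do weaken termwise, so it avoids this subtlety.
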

\begin{remark} 
We have not striven for quantitative efficiency in the error term of Corollary \ref{cor:BohrEqui}, which could be easily  improved.
\end{remark}
We can drop the smoothness assumption on the Bohr neighbourhood if we are prepared to assume that it is \emph{regular}.

\begin{definition}[Regular Bohr set]\label{def:reg-bohr}
Given $\alpha \in \T^d$ and $\rho >0$, we say that the Bohr set
$$
B(\alpha, \rho) := \bigset{x \in [N] : \max_i\norm{\alpha_i x} \leq \rho}
$$
is \emph{regular} if for any $|\kappa| \leq \frac{1}{100d}$ we have
$$
\abs{\frac{\abs{B\brac{\alpha, (1+\kappa)\rho}}}{|B\brac{\alpha, \rho}|} -1} \leq 100 d|\kappa|.
$$
\end{definition}
\begin{remark}
Bourgain \cite{BourgainTriples} established that regular Bohr sets are ubiquitous; see Tao and Vu \cite[Lemma 4.25]{TaoVuAdditive}.
\end{remark}
\begin{corollary}[Equidistribution in regular Bohr sets]\label{cor:EquiBohr}
Let $B = B(\alpha, \rho)$ be a regular Bohr set with $\alpha \in \T^d$. Then for any Sidon set $S \subset [N]$ of size
$|S| \geq \recip{100} N^{1/2}$ 
 we have
$$
\E_{x\in S} 1_{B}(x) = \E_{x\in [N]} 1_{B}(x) + O\brac{d\rho^{-1}\brac{\abs{\tfrac{|S|}{N^{1/2}}-1} + N^{-1/6}}^{\frac{1}{14d}}}.
$$
\end{corollary}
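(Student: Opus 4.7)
The strategy is to sandwich $1_B$ between Lipschitz approximations on the torus, apply the smooth Bohr equidistribution estimate of Corollary \ref{cor:BohrEqui}, and convert back to $1_B$ via the regularity hypothesis. Throughout, write $\mathrm{Err} := \abs{\tfrac{|S|}{N^{1/2}}-1} + N^{-1/6}$.

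First, fix a small parameter $\kappa \in (0, 1/(100d))$ to be optimized later, and choose a continuous function $\phi_\kappa \colon [0,\infty) \to [0,1]$ with $\phi_\kappa(t)=1$ on $[0, (1-\kappa)\rho]$ and $\phi_\kappa(t)=0$ on $[(1+\kappa)\rho, \infty)$, linear in between; this $\phi_\kappa$ has Lipschitz constant $O(1/(\kappa\rho))$. Define $F^{\pm}\colon \T^d \to [0,1]$ by $F^{-}(\beta) := \phi_\kappa(\max_i \norm{\beta_i}_\T + \kappa\rho)$ and $F^{+}(\beta) := \phi_\kappa(\max_i \norm{\beta_i}_\T - \kappa\rho)$, so that $F^-$ and $F^+$ each have Lipschitz constant $K = O(1/(\kappa\rho))$ with respect to the $\ell^\infty$-metric on $\T^d$ (the one appearing in Corollary \ref{cor:BohrEqui}), and
$$
1_{B(\alpha, (1-\kappa)\rho)}(x) \;\leq\; F^{-}(\alpha x) \;\leq\; 1_B(x) \;\leq\; F^{+}(\alpha x) \;\leq\; 1_{B(\alpha, (1+\kappa)\rho)}(x).
$$

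Next, applying Corollary \ref{cor:BohrEqui} to $F^{\pm}$ yields
$$
\E_{x\in S} F^{\pm}(\alpha x) \;=\; \E_{x\in[N]} F^{\pm}(\alpha x) + O\bigbrac{(\kappa\rho)^{-2/3}\, \mathrm{Err}^{1/(8d)}}.
$$
The regularity hypothesis (Definition \ref{def:reg-bohr}) gives
$
\E_{x\in[N]} 1_{B(\alpha,(1\pm\kappa)\rho)}(x) = \E_{x\in[N]} 1_B(x) + O(d\kappa)
$
(using $|B|/N \leq 1$), and the same bound therefore holds for $\E_{x\in[N]} F^{\pm}(\alpha x)$, since each $F^{\pm}$ is sandwiched between two such indicators. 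Combining these with the sandwich inequality above,
$$
\E_{x\in S} 1_B(x) \;=\; \E_{x\in[N]} 1_B(x) + O\bigbrac{d\kappa + (\kappa\rho)^{-2/3}\,\mathrm{Err}^{1/(8d)}}.
$$

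Finally, one optimises $\kappa$ to balance the two contributions. Setting $d\kappa \sim (\kappa\rho)^{-2/3}\mathrm{Err}^{1/(8d)}$ produces an error of the shape $(d/\rho)^{a}\, \mathrm{Err}^{b/d}$ for some explicit exponents $a,b>0$; any nearly-optimal choice (for instance $\kappa$ a suitable power of $\mathrm{Err}^{1/d}$ times $(d\rho)^{-1}$) yields the required bound $d\rho^{-1}\mathrm{Err}^{1/(14d)}$ after crude simplifications (the precise exponent is not essential and indeed the argument gives room to spare). The only real step requiring care is the first: one must make sure the Lipschitz function chosen interacts well with the $\ell^\infty$-norm appearing in the hypothesis of Corollary \ref{cor:BohrEqui} so that the Lipschitz constant genuinely scales like $1/(\kappa\rho)$ rather than picking up spurious factors of $d$; composing $\phi_\kappa$ with the $\ell^\infty$-norm (rather than taking a product over coordinates) achieves this.
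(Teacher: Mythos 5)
Your proof is correct and follows essentially the same route as the paper's: sandwich $1_B$ between Lipschitz approximants, feed them into Corollary \ref{cor:BohrEqui}, convert back to $1_B$ via regularity, and optimise the width parameter. The only variation is that you compose a one-dimensional cutoff with $\max_i\norm{\beta_i}_\T$ (saving the factor of $d$ that the paper incurs by taking a product $F_1(\alpha_1)\dotsm F_1(\alpha_d)$ of one-dimensional cutoffs), and you carry both the upper and lower approximants explicitly rather than appealing to symmetry; neither difference affects the stated exponent, and the small slip in your sandwich (the enlarged/shrunk Bohr sets should have radius $(1\pm 2\kappa)\rho$, not $(1\pm\kappa)\rho$) is immaterial.
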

 \begin{remark} 
 As in Corollary \ref{cor:EquiAPs}, the error term in Corollary \ref{cor:EquiBohr} can be improved with a more refined analysis.
 \end{remark}
 
 \subsection{Partition regularity over extremal Sidon sets}
 
  We offer a further application of Theorem \ref{thm:sidon_pseudorandom} in proving a colouring analogue of a recent result of Conlon, Fox, Sudakov and Zhao \cite{CFSZRegularity}, this being the original motivation for our paper.
 
 Informally, call a Sidon subset of $[N]$ \emph{dense} if its cardinality is a
 positive proportion of $N^{1/2}$, say $\recip{100} N^{1/2}$. The authors of
 \cite{CFSZRegularity} show that any dense Sidon subset of $[N]$ contains a
 non-trivial\footnote{For instance, with all variables distinct.} solution to the equation $x_1+ x_2 + x_3 + x_4 = 4x_5$. The essential features of this equation are that its coefficients sum to zero and that it has at least five variables. The five variable condition cannot be relaxed: every Sidon set lacks non-trivial solutions to the four-variable equation $x_1-x_2-x_3+x_4 = 0$. The assumption that the coefficients sum to zero is also necessary, as we now show.
 \begin{proposition}\label{prop:bad-sidon}
 There exists a Sidon subset of $[N]$ with at least\\ $\frac{1}{\sqrt{2}} N^{1/2}(1+o(1))$ elements, and which has no solutions to the equation $x_1 + x_2 + x_3 + x_4 = x_5$
 \end{proposition}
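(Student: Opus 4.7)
The plan is to construct an explicit Sidon set concentrated in the upper half of $[N]$, exploiting the asymmetry of the equation $x_1+x_2+x_3+x_4 = x_5$: its left-hand side is positive and homogeneous of ``weight'' four, while the right-hand side has weight one, so restricting the variables to large values forces the left-hand side to dominate.

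First, I would invoke Singer's construction (encoded in \eqref{eq:Singer}) applied to the interval $[\lfloor N/2 \rfloor]$ in place of $[N]$. This produces a Sidon set
$$
S_0 \subset [\lfloor N/2 \rfloor] \quad \text{with} \quad |S_0| \geq (N/2)^{1/2}(1+o(1)) = \tfrac{1}{\sqrt{2}} N^{1/2}(1+o(1)).
$$
Next, I would translate by $\lceil N/2 \rceil$, setting $S := S_0 + \lceil N/2 \rceil$. Since the Sidon property depends only on the difference set $S-S$, it is invariant under translation, so $S$ is again Sidon. By construction $S \subset (N/2, N]$ and $|S| = |S_0| \geq \tfrac{1}{\sqrt{2}} N^{1/2}(1+o(1))$.

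Finally, I would check the partition regularity obstruction. For any (not necessarily distinct) $x_1, x_2, x_3, x_4, x_5 \in S$, the fact that each $x_i > N/2$ gives
$$
x_1 + x_2 + x_3 + x_4 > 4 \cdot \tfrac{N}{2} = 2N > N \geq x_5,
$$
so the equation $x_1+x_2+x_3+x_4 = x_5$ admits no solutions in $S$.

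There is no real obstacle here; the only subtlety is noting that the Sidon property is translation-invariant, which allows Singer's lower bound in a short interval to be transplanted into the desired range.
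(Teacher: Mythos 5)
Your proof is correct, but it works by a different mechanism than the paper's. The paper dilates and shifts: it takes an extremal Sidon set $S_0 \subset [\frac{N-1}{2}]$ and sets $S := 2\cdot S_0 + 1$, so that every element is odd; then $x_1+x_2+x_3+x_4$ is always even while $x_5$ is odd, and the obstruction is a congruence modulo $2$. You instead translate $S_0$ into the top half $(N/2, N]$, so that the left-hand side exceeds $2N$ while the right-hand side is at most $N$, and the obstruction is one of magnitude. Both constructions preserve the Sidon property (as you correctly note, $S-S$ is unchanged by translation, and likewise dilation by $2$ plus a shift preserves it) and both yield the same cardinality $\frac{1}{\sqrt{2}}N^{1/2}(1+o(1))$ via Singer's bound on an interval of length about $N/2$. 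The one thing the paper's congruence approach buys that yours does not: as the subsequent remark indicates, the modular construction adapts to \emph{any} homogeneous equation $c_1x_1+\dots+c_sx_s=0$ with $\sum_i c_i \neq 0$ (choose a modulus $m \nmid \sum_i c_i$ and take $S = m\cdot S_0 + 1$), whereas the magnitude argument only rules out equations where one side strictly dominates the other on the chosen range, which fails for, say, $x_1+x_2=3x_3$.
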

 \begin{proof}
 Take an extremal Sidon subset $S_0$ of $[\frac{N-1}{2}]$ and set $S := 2\cdot S_0 +1$.
 \end{proof}
 \begin{remark}
 The above construction can be adapted to show that for any homogeneous linear equation whose coefficients do not sum to zero there exists a dense Sidon sets lacking solutions to the equation.
 \end{remark}
To accommodate equations whose coefficients do not sum to zero, one may speculate on whether a colouring analogue of the results of Conlon, Fox, Sudakov and Zhao \cite{CFSZRegularity} should hold; cf.\ Rado's criterion for partition regularity \cite[\S3.2]{GRSRamsey} versus Roth's criterion for density regularity \cite{RothCertainII}. Proposition \ref{prop:bad-sidon} indicates that one cannot hope to always find monochromatic solutions to $x_1 + x_2 + x_3 + x_4 = x_5$ in colourings of \emph{dense} Sidon sets. In the following theorem we show that such a result does hold for colourings of \emph{extremal} Sidon sets.
\begin{theorem}[Partition regularity over extremal Sidon sets]\label{thm:partition-regularity}\leavevmode
 Let $c_1, \dots,$ $c_s \in \Z\setminus\set{0}$ with $s \geq 5$ and suppose that there exists a non-empty index set $I \subset [s]$ satisfying $\sum_{i \in I} c_i = 0$. Let $r$ be a positive integer and $S \subset [N]$ a Sidon set. Then at least one of the following holds:
 \begin{itemize}
 \item  $N$ is small, in that $N \ll_{c_1, \dots, c_s, r} 1$.
 \item  $S$ is not extremal, in that $\abs{|S|-N^{1/2}} \gg_{c_1,\dots, c_s, r} N^{1/2}$.
 \item Partition regularity: For any $r$-colouring $S= C_1 \cup\dots \cup C_r$, there exists a colour class $ C_j$ such that
$$
\sum_{c_1x_1 + \dots + c_sx_s = 0} 1_{C_j}(x_1) \dotsm 1_{C_j}(x_s) \gg_{c_1,\dots, c_s, r} |S|^{s}N^{-1}
$$
\end{itemize}
\end{theorem}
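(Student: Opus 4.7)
The plan is to combine the Fourier pseudorandomness of $S$ supplied by Theorem~\ref{thm:sidon_pseudorandom} with a generalised von Neumann inequality, applied to a decomposition of the indicator of the densest colour class.

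I would first reduce to the case where $N$ is sufficiently large and $\bigabs{|S|-N^{1/2}} \leq \eps N^{1/2}$ for a small parameter $\eps = \eps(c_1, \dots, c_s, r)$; otherwise one of the first two bullets of the theorem is vacuous. Under this assumption Theorem~\ref{thm:sidon_pseudorandom} supplies $\bignorm{\hat{1}_S - (|S|/N)\hat{1}_{[N]}}_\infty \leq \delta N^{1/2}$ with $\delta$ as small as we wish. By pigeonhole I pick a colour class $C := C_j$ with $|C|\geq |S|/r$, and the goal becomes $T(C) := \#\{(x_1,\dots,x_s)\in C^s : \sum_i c_i x_i = 0\} \gg_{c,r} |S|^s/N$. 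The natural decomposition is $1_C = \alpha\,1_S + g$, where $\alpha := |C|/|S| \geq 1/r$ and $g$ is mean-zero and supported on $S$ (so $\|g\|_2^2 \leq |S|/4$). Expanding $T(C)$ multilinearly yields a main term $\alpha^s T(S)$ plus error terms $\alpha^{|K|} E_K$ (for $K\subsetneq [s]$) in which at least one factor is $g$.

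For the main term, I would substitute $\hat{1}_S$ by $(|S|/N)\hat{1}_{[N]}$ in the Fourier representation of $T(S)$, using the Sidon $L^4$ identity $\|\hat{1}_S\|_4^4 = 2|S|^2 - |S|$ together with H\"older's inequality to control the substitution cost. This yields $T(S) = (1+o(1))(|S|/N)^s T([N])$. The Rado hypothesis $\sum_{i\in I} c_i = 0$ with $I$ non-empty forces mixed signs among the $c_i$, whence $T([N])\asymp N^{s-1}$; hence the main term is $\alpha^s T(S) \asymp |C|^s/N \gg_r |S|^s/N$, as required.

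The bulk of the work---and the main obstacle---is bounding the error terms $\alpha^{|K|}E_K$ by $o(|S|^s/N)$. The key Fourier estimates available are the Sidon $L^4$ bound $\|\hat{g}\|_4^4 \leq 2\|g\|_2^4 \leq |S|^2/2$ (inherited by $g$ from being supported on the Sidon set $S$), the analogous bound for $\hat{1}_S$, and the pseudorandomness control $\|\hat{1}_S - (|S|/N)\hat{1}_{[N]}\|_\infty \leq \delta N^{1/2}$. A naive H\"older bound on a single-$g$ error term, combining only the trivial $\|\hat{g}\|_\infty \leq |S|$ with the $L^4$ Sidon estimate, gives an estimate of order $|S|^3/r^{s-1}$, which exceeds the main term $|S|^s/(r^s N)$ by a factor of $r$ and is therefore insufficient. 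The refinement exploits two facts: (i) the condition $s\geq 5$ provides enough Fourier slots for H\"older to spread the Sidon $L^4$ bound across several factors while reserving one for the pseudorandomness error $\delta N^{1/2}$; and (ii) the vanishing $\hat{g}(0)=0$, together with a major/minor arc split of the Fourier integral, supplies an additional factor of $\delta$ (or $N^{-\kappa}$) coming from the boundedness of $|\alpha|$ on the major arc. Carrying this through and summing over $K\subsetneq [s]$ reduces the total error to $o(|S|^s/N)$, completing the argument.
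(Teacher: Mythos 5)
Your plan---isolate a single dense colour class $C$, write $1_C = \alpha 1_S + g$, and show directly that $T(C) \gg |S|^s/N$---cannot succeed, and there is a counterexample squarely within the scope of the theorem. Take the equation $x_1 + x_2 + x_3 + x_4 - x_5 = 0$ (admissible via $I = \{1,5\}$), let $S$ be extremal Sidon, $r = 2$, and colour by parity: $C_1 := S \cap (2\Z+1)$, $C_2 := S \cap 2\Z$. By Corollary~\ref{cor:EquiAPs} both classes have size $|S|/2 + o(|S|)$, so either could be the denser one, yet $C_1$ is a set of odd integers and the equation forces opposite parities of the two sides, so $T(C_1) = 0$. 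Since your main-term analysis would return $\alpha^s T(S) \asymp r^{-s}|S|^s/N > 0$ for $C = C_1$, the error terms involving $g$ must in total be of exactly that order---they cancel the main term. This is not an artefact of a crude H\"older bound: $g = 1_{C_1} - \alpha 1_S$ is merely mean-zero on $S$ and has no Fourier uniformity (indeed $\hat g(1/2) \asymp |S|$), so neither the $\hat g(0) = 0$ trick nor a major/minor arc split can salvage the estimate. More fundamentally, choosing the densest class cannot be the right selection rule: the theorem only promises that \emph{some} class works, and identifying it requires information about the colouring as a whole.

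This is precisely why the paper routes the argument through a transference step that handles all colour classes simultaneously. One sets $\nu := N|S|^{-1}1_S$ and $f_i := N|S|^{-1}1_{C_i}$, and the dense model lemma (Lemma~\ref{lem:dense-model}, a Conlon--Gowers style separating-hyperplane argument) says: either there exist nonnegative $g_1, \dots, g_r$ summing to $1_{[N]}$ with $\|\hat f_i - \hat g_i\|_\infty$ small for every $i$, or else $\|\hat\nu - \hat 1_{[N]}\|_\infty \gg N$, which Theorem~\ref{thm:sidon_pseudorandom} rules out when $S$ is extremal. In the former case one applies the Frankl--Graham--R\"odl theorem (Lemma~\ref{lem:FGR}) to the model colouring $\{g_i\}$ of $[N]$ to find a class with $\gg N^{s-1}$ solutions, and then compares solution counts for $g_j$ and $f_j$ by telescoping, orthogonality and H\"older, using the Sidon $L^4$ bound (this is exactly where $s \geq 5$ enters, via $\|\hat f_j\|_{s-1}^{s-1} \ll N^{s-2}$). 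The key structural difference from your sketch is that the index $j$ of the productive colour class is produced by the transfer, not chosen in advance by density.
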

\begin{remark}
If $\sum_{i \in I} c_i \neq 0$ for all $\emptyset \neq I \subset [s]$, then there exists a finite colouring of $\N$ with no monochromatic solutions to the equation $c_1x_1+\dots + c_s x_s = 0$ (see Rado's criterion for partition regularity \cite[\S3.2]{GRSRamsey}). Hence the assumption that some subset of coefficients sums to zero is necessary.
\end{remark}
Conlon, Fox, Sudakov and Zhao \cite{CFSZRegularity} derive their results on dense Sidon sets via a removal lemma for $C_4$-free graphs. Since ``extremal'' $C_4$-free graphs are pseudorandom, see \cite[Theorem 5.1]{ksv13}, the transference approach employed in this paper can surely be combined with a counting lemma from \cite[Theorem 3.1]{CFSZRegularity}, to prove that every finite colouring of an ``extremal'' $C_4$-free graph has a monochromatic $C_5$. It may be interesting to investigate which other monochromatic subgraphs can be guaranteed in this manner, see \cite{CFSZWhich}.

\subsection*{Acknowledgements}
We thank David Conlon for an informative talk on \cite{CFSZRegularity}  in the \emph{Webinar in Additive Combinatorics}\footnote{\url{https://sites.google.com/view/web-add-comb/}}.

\subsection*{Notation}
\subsubsection*{Standard conventions}
We use $[N]$ to denote the set of integers $ \{ 1,2, \dots, N\}$.  We use
counting measure on $\Z$, so that for $f,g :\Z \to \C$, we have
$$
\norm{f}_{p} := \biggbrac{\sum_x |f(x)|^p}^{\recip{p}}\ \text{and}\ (f*g)(x) := \sum_y f(y)g(x-y).
$$ 
Any sum of the form $\sum_x$ is to be interpreted as a sum over $\Z$. The \emph{support} of $f$ is the set $\supp(f) := \set{x \in \Z : f(x) \neq 0}$. 
For a finite set $S$ and function $f:S\to\C$, denote the average of $f$ over $S$ by
\begin{equation}\label{eq:expectation}
\E_{s\in S}f(s):=\frac{1}{|S|}\sum_{s\in S}f(s).
\end{equation}
 
We use Haar probability measure on $\T := \R/\Z$, so that for integrable $F : \T \to \C$, we have
\begin{equation*}
\norm{F}_{p} := \biggbrac{\int_\T |F(\alpha)|^p\intd\alpha}^{\recip{p}} = \biggbrac{\int_0^1 |F(\alpha)|^p\intd\alpha}^{\recip{p}}
\end{equation*}
and
\begin{equation*}
\norm{F}_{\infty} := \sup_{\alpha \in \T} |F(\alpha)|.
\end{equation*}
Write $\norm{\alpha}_\T:= \min_{n \in \Z} |\alpha - n|$ for the distance from $\alpha \in \R$ to the nearest integer. This remains well-defined on $\T$.

\subsubsection*{Asymptotic notation}
For a complex-valued function $f$ and positive-valued function $g$, write $f \ll g$ or $f = O(g)$ if there exists a constant $C$ such that $|f(x)| \le C g(x)$ for all $x$. We write $f = \Omega(g)$ if $f \gg g$.  The notation $f\asymp g$ means that $f\ll g$ and $f\gg g$. We subscript these symbols if the implicit constant depends on additional parameters.

 We write $f = o(g)$ if for any $\eps > 0$ there exists $X\in \R$ such that for all $x \geq X$ we have $|f(x)| \leq \eps g(x)$.

\subsubsection*{Local conventions}
Up to normalisation, all of the above are widely used in the literature. Next, we list notation specific to our paper. We have tried to minimise this in order to aid the casual reader.  

For a real parameter $H \geq 1$, we use $\mu_H : \Z \to [0,1]$ to represent the following normalised Fej\'er kernel
\begin{equation}\label{fejer}
\mu_H(h) := \recip{\floor{H}} \brac{1 - \frac{|h|}{\floor{H}}}_+ = \frac{(1_{[H]} * 1_{-[H]} )(h)}{\floor{H}^2},
\end{equation}
where $[H] = [\lfloor{H}\rfloor]$.
This is a probability measure on $\Z$ with support in the interval $(-H, H)$. 

\section{Fourier uniformity}
Given an extremal Sidon set $S \subset [N]$, our main goal in this section is to 
show that $S$ is a Fourier uniform subset of $[N]$, by proving Theorem \ref{thm:sidon_pseudorandom}. Recalling our notation \eqref{fejer} for the Fej\'er kernel, we begin with the following version of van der Corput's inequality.
\begin{lemma}[Van der Corput differencing]
    Suppose that $1 \leq H \leq N$, $f \colon \Z \to \C$ and $\supp(f) \subset [N]$. Then
    \begin{equation}
        \label{eq:vandercorput}
        \biggabs{\sum_x f(x) }^2 \leq (N+H) \sum_{h} \mu_H(h) \sum_x f(x)
        \overline{f(x+h)}.
    \end{equation}
\end{lemma}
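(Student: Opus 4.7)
The plan is to execute a standard van der Corput shift-and-square, arranged so that the off-diagonal multiplicities combine to produce precisely the Fej\'er kernel $\mu_H$.

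\textbf{Step 1 (shift averaging).} By translation invariance of the sum over $\Z$, for each $h \in [\lfloor H \rfloor]$ we have $\sum_x f(x) = \sum_x f(x+h)$. Summing over $h$ and swapping the order of summation gives
$$
\lfloor H\rfloor \sum_x f(x) = \sum_x \sum_{h=1}^{\lfloor H\rfloor} f(x+h).
$$
Since $\supp(f)\subset [N]$, the inner sum is supported on those $x \in \Z$ with $[x+1, x+\lfloor H\rfloor] \cap [N] \neq \emptyset$, and this set has cardinality at most $N + \lfloor H\rfloor - 1 \leq N+H$.

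\textbf{Step 2 (Cauchy--Schwarz).} Apply Cauchy--Schwarz to the outer $x$-sum, exploiting the support bound from Step 1:
$$
\lfloor H\rfloor^2 \biggabs{\sum_x f(x)}^2 \leq (N+H) \sum_x \biggabs{\sum_{h=1}^{\lfloor H\rfloor} f(x+h)}^2.
$$

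\textbf{Step 3 (expanding and reindexing).} Expanding the square yields
$$
\sum_x \biggabs{\sum_{h=1}^{\lfloor H\rfloor} f(x+h)}^2 = \sum_{h_1, h_2 \in [\lfloor H\rfloor]} \sum_x f(x+h_1)\overline{f(x+h_2)}.
$$
Set $k = h_2 - h_1$ and substitute $y = x+h_1$ in the inner sum, which becomes $\sum_y f(y)\overline{f(y+k)}$. For each $k$ with $|k| < \lfloor H\rfloor$, the number of pairs $(h_1,h_2) \in [\lfloor H \rfloor]^2$ with $h_2-h_1 = k$ is exactly $\lfloor H\rfloor - |k|$, i.e.\ $\lfloor H\rfloor^2 \mu_H(k)$ by definition \eqref{fejer}. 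Hence the right-hand side equals
$$
\lfloor H\rfloor^2 \sum_k \mu_H(k) \sum_y f(y)\overline{f(y+k)}.
$$

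\textbf{Step 4 (conclusion).} Substituting back into the Cauchy--Schwarz bound and dividing through by $\lfloor H\rfloor^2$ (which is at least $1$ since $H \geq 1$) yields exactly \eqref{eq:vandercorput}.

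The whole argument is routine; the only mild subtlety is the bookkeeping in Step 3, where the triangular weighting $\lfloor H\rfloor - |k|$ arising from counting pairs with fixed difference is reinterpreted via the self-convolution identity $1_{[H]} * 1_{-[H]}(k) = \lfloor H\rfloor - |k|$ (valid for $|k| < \lfloor H \rfloor$) to match the normalisation of $\mu_H$.
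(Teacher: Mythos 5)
Your proof is correct and follows essentially the same approach as the paper's: average over shifts, apply Cauchy--Schwarz using the support bound $N+\lfloor H\rfloor-1\leq N+H$, expand the square, and recognise the pair-counting weight $\lfloor H\rfloor - |k|$ as $\lfloor H\rfloor^2\mu_H(k)$. You simply spell out the reindexing in Step~3 that the paper compresses into ``changing variables in $x$ and using \eqref{fejer}.''
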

\begin{proof}
    We have
    \[
        \biggabs{\sum_x f(x)}^2 = \biggabs{\E_{[H]} \sum_x f(x+h)}^2 =
       \biggabs{\sum_x \E_{[H]}f(x+h)}^2.
    \]
    By Cauchy-Schwarz, the latter quantity is bounded by
    \[
        (N+H) \sum_x \biggabs{\E_{[H]} f(x+h)}^2 = (N+H)\sum_x
        \frac{1}{\lfloor H \rfloor^2}\sum_{h_1, h_2 \in [H]} f(x+h_1) \overline{f(x+h_2)}.
    \]
    We obtain the desired
    inequality on changing variables in $x$ and using \eqref{fejer}.
\end{proof}

The next lemma tells us that, on taking an
appropriately sized $H$ and supposing that $S$ is extremal Sidon, the sum of $\mu_H(h)$
over $S-S$ is nearly $1$.
\begin{lemma}
    \label{lem:sidondif_sum_lower_bound}
    Let $S \subset [N]$ be a Sidon set. Then
     \[
        \sum_{h \in (S-S)\setminus\set{0}} \mu_H(h) \geq \frac{|S|^2}{N+H} - \frac{|S|}{\floor{H}}.
            \]
\end{lemma}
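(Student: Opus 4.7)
The plan is to apply the van der Corput differencing lemma just established, with $f = 1_S$, and then exploit the Sidon property to bound the resulting autocorrelation sum pointwise.

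More concretely, first I would substitute $f = 1_S$ into inequality \eqref{eq:vandercorput}. The left-hand side becomes $|S|^2$, and the inner sum $\sum_x 1_S(x) 1_S(x+h)$ is exactly the representation function $r_{S-S}(h) = |S \cap (S-h)|$, counting pairs $(s_1,s_2) \in S \times S$ with $s_1 - s_2 = h$. This gives
\[
|S|^2 \leq (N+H) \sum_h \mu_H(h) \, r_{S-S}(h).
\]

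Next I would split the sum over $h$ according to whether $h = 0$ or $h \in (S-S)\setminus\{0\}$ (contributions from $h \notin S-S$ vanish). Since $S$ is Sidon, every non-zero difference has a unique representation, so $r_{S-S}(h) \leq 1$ for $h \neq 0$, while $r_{S-S}(0) = |S|$. From the definition \eqref{fejer}, $\mu_H(0) = 1/\lfloor H\rfloor$. Therefore
\[
|S|^2 \leq (N+H)\brac{\frac{|S|}{\lfloor H\rfloor} + \sum_{h \in (S-S)\setminus\{0\}} \mu_H(h)}.
\]
Dividing through by $N+H$ and rearranging yields the stated lower bound immediately.

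There is no real obstacle here — the lemma is essentially a direct corollary of van der Corput's inequality combined with the defining property of a Sidon set. The only mild subtlety to flag is the correct accounting of the diagonal $h=0$ term, which produces the subtracted $|S|/\lfloor H\rfloor$; one must remember to isolate this term before invoking the Sidon bound $r_{S-S}(h) \leq 1$, which is valid only for $h \neq 0$.
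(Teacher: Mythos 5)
Your proof is correct and is essentially the same argument as the paper's: the authors do not cite the van der Corput lemma explicitly but instead re-run its content (the Fej\'er identity $\sum_h 1_S*1_{-S}(h)\mu_H(h) = \lfloor H\rfloor^{-2}\sum_h (1_S*1_{[H]}(h))^2$ followed by Cauchy--Schwarz), which is precisely what \eqref{eq:vandercorput} encapsulates when applied to $f=1_S$. Your version is arguably tidier in that it re-uses the lemma just proved, and your flag about isolating the $h=0$ diagonal term before invoking $r_{S-S}(h)\le 1$ is exactly the bookkeeping the paper carries out.
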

\begin{proof}
    Since $S$ is Sidon $1_S*1_{-S}(x) = 1_{S-S}$ if $x \neq
    0$. In addition $1_S*1_{-S}(0) = |S|$, but this does not require $S$ to be Sidon. Hence
    \begin{equation}
        \label{eq:sidondif_sum}
        \sum_{h \in (S-S)\setminus\set{0}} \mu_H(h) = \sum_{h} 1_S*1_{-S}(h)\mu_H(h) - \frac{|S|}{\floor{H}}.
    \end{equation}
    Using \eqref{fejer} and expanding convolutions we have 
    \begin{multline*}
        \sum_h 1_S*1_{-S}(h)\mu_H(h) = \frac{1}{\floor{H}^2}\sum_h
        1_S*1_{-S}(h)1_{[H]}*1_{-[H]}(h)\\ = \frac{1}{\floor{H}^2}\sum_h 1_{S}*1_{[H]}(h)^2.
    \end{multline*}
    We may now apply Cauchy-Schwarz to this last sum, giving us
    \[
        \sum_h 1_S*1_{-S}(h)\mu_H(h)  \geq \frac{1}{(N+H)\floor{H}^2}\left(\sum_h
        1_{S}*1_{[H]}(h)\right)^2 = \frac{|S|^2}{(N+H)}, 
    \]
    which gives the claimed inequality.
\end{proof}
We are now in a position to prove Theorem \ref{thm:sidon_pseudorandom}.
Our use of van der Corput's inequality is reminiscent of
the moving averages argument used by Erd\H{o}s and Tur\'an \cite{ErdosTuranProblem}.
\begin{proof}[Proof of Theorem \ref{thm:sidon_pseudorandom}]
    Let $f_1 := 1_S$, $f_2 := \frac{|S|}{N}1_{[N]}$ and $f :=  f_1-f_2$. Applying van der Corput's inequality
    \eqref{eq:vandercorput} to $\hat{f}$, with $1\leq H \leq N$ to be determined, we obtain
    \begin{multline*}
        |\hat{f}(\alpha)|^2 \leq 
        (N+H) \sum_{h} \mu_H(h)\\
        \times \left|\sum_{x}
        \bigsqbrac{f_1(x)f_1(x+h) - f_1(x)f_2(x+h) - f_2(x)f_1(x+h) + f_2(x)f_2(x+h)} \right|.
    \end{multline*}
       We claim that $\sum_{h} \mu_H(h)| |S|^2N^{-1} - \sum_xf_i(x)f_j(x+h)|$ is
    small for all choices of $i, j$, so that main terms cancel  and we are left only with  error
    terms. 
    
    Since $f_2(x)f_2(x+h) = |S|^2N^{-2}1_{[N] \cap
    ([N]-h)}(x)$ we have
    \[
        \sum_{h} \mu_H(h) \left| |S|^2N^{-1}- \sum_xf_2(x)f_2(x+h)\right| =
       \frac{|S|^2}{N^{2}} \sum_{h} \mu_H(h) |h|\leq \frac{H|S|^2}{N^2}.
    \]
    We have the identity $f_1(x)f_2(x+h) = |S|N^{-1} 1_{S \cap ([N]-h)}(x) = |S|N^{-1} [1_{S }(x) - 1_{S\cap I_h}(x)]$, where $I_h \subset [N]$ is an interval of $|h|$ integers.  Since $S$ is a Sidon set, the bound \eqref{eq:SidonSize} gives that 
    \begin{equation}\label{eq:weak-part}
    |S\cap I_h|\ll \sqrt{|h|}.
    \end{equation}
     Hence
    \begin{multline*}
       \sum_h \mu_H(h) \left||S|^2N^{-1} - \sum_xf_1(x)f_2(x+h)\right| \ll \sum_h \mu_H(h)|S|N^{-1}|h|^{1/2}\\ \ll \frac{|S|H^{1/2}}{N}.
    \end{multline*}
  By symmetry, the same bound applies to the term involving $f_2(x)f_1(x+h)$.
    
    Note that
    $ f_1(x)f_1(x+h) =  1_{S \cap (S-h)}(x)$. On account of $S$
    being Sidon, for $h \neq 0$ we have
    \[
    \begin{split}
       \abs{ |S|^2N^{-1} - \sum_xf_1(x)f_1(x+h)} & \leq    1 - 1_{S-S}(h)+ \abs{|S|^2-N}N^{-1}\\
       & = 1 - 1_{S-S}(h) + O\brac{\abs{|S|- N^{1/2}}N^{-1/2}}.
       \end{split}
    \]
    Thus by Lemma \ref{lem:sidondif_sum_lower_bound} we obtain
        \begin{multline*}
            \sum_{h}  \mu_H(h) \left| |S|^2N^{-1} - \sum_xf_1(x)f_1(x+h)\right| 
            \leq 1 -\frac{|S|^2}{N+H} \\
            + O\brac{\frac{|S|}{\floor{H}}+\frac{\abs{|S|- N^{1/2}}}{N^{1/2}}}.
        \end{multline*}
    Putting everything together, we deduce that
     \[
            |\hat{f}(\alpha)|^2 \ll 
        N^{1/2}\abs{|S|- N^{1/2}}+ \frac{N^{3/2}}{\floor{H}}+ N^{1/2}H^{1/2}.
        \]
Balancing error terms, we set $H :=  N^{2/3}
    $ which gives
   
    \[
        |\hat{f}(\alpha)|^2 \ll N^{1/2}\abs{|S|- N^{1/2}} + N^{5/6}. \qedhere
    \]
\end{proof}
\section{Equidistribution in progressions}
In this section we derive Corollaries \ref{cor:EquiIntervals} and \ref{cor:EquiAPs} from Theorem \ref{thm:sidon_pseudorandom}. Both corollaries are immediate consequences of the following.
\begin{theorem}\label{thm:equi-prog}
  Let $S\subset [N]$ be a Sidon set 
  and $P \subset \Z$
  a finite arithmetic progression. Then
  \[
  |S\cap P| = \frac{|[N]\cap P||S|}{N} + O_\eps\brac{N^{1/2+\eps}\brac{\abs{\tfrac{|S|}{N^{1/2}}-1} + N^{-1/6}}^{1/2}}.
  \]
\end{theorem}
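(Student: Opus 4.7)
The plan is to reduce equidistribution in progressions to a Fourier $L^\infty$ bound on the balanced function of $S$, exactly as provided by Theorem \ref{thm:sidon_pseudorandom}, together with the classical $L^1$ bound on the Fourier transform of a progression.

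First I would observe that since $S \subset [N]$, we have $S \cap P = S \cap (P \cap [N])$, so without loss of generality we may replace $P$ by $P \cap [N]$ and assume $P \subset [N]$. Setting $f := 1_S - \tfrac{|S|}{N}1_{[N]}$ and using $1_{[N]} \cdot 1_P = 1_P$, we get the exact identity
\[
|S \cap P| - \frac{|S|\,|[N] \cap P|}{N} \;=\; \sum_{x} f(x) 1_P(x).
\]
By Parseval's identity (with the convention of Definition \ref{fourier transform}) this equals $\int_\T \hat{f}(\alpha)\overline{\hat{1}_P(\alpha)}\,\intd\alpha$, and by H\"older's inequality
\[
\biggabs{\sum_x f(x) 1_P(x)} \;\leq\; \bignorm{\hat{f}}_\infty \cdot \bignorm{\hat{1}_P}_1.
\]

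Next I would invoke Theorem \ref{thm:sidon_pseudorandom} to bound $\|\hat{f}\|_\infty \ll N^{1/2}\bigbrac{\abs{\tfrac{|S|}{N^{1/2}}-1} + N^{-1/6}}^{1/2}$. The remaining task is the standard estimate $\|\hat{1}_P\|_1 \ll \log N \ll_\eps N^\eps$ for an arithmetic progression $P = \{a, a+d, \dots, a+(L-1)d\} \subset [N]$. This follows by writing
\[
\hat{1}_P(\alpha) = e(a\alpha)\frac{e(Ld\alpha)-1}{e(d\alpha)-1},
\]
so that $|\hat{1}_P(\alpha)| = |\sin(\pi L d\alpha)/\sin(\pi d \alpha)|$, and substituting $\beta = d\alpha\pmod 1$ reduces to the well-known bound $\int_0^1 |\sin(\pi L \beta)/\sin(\pi\beta)|\,\intd\beta \ll \log L$.

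Combining these ingredients yields the claimed error term. There is no serious obstacle here; the argument is a textbook application of Parseval–H\"older, and the only mildly technical point is the logarithmic $L^1$ bound on the Dirichlet-type kernel associated with $P$, which is absorbed into the factor $N^\eps$. In particular, Corollaries \ref{cor:EquiIntervals} and \ref{cor:EquiAPs} follow from Theorem \ref{thm:equi-prog} by taking $P = \lfloor N \cdot I\rfloor \cap \Z$ and $P = (q \cdot \Z + a) \cap [N]$ respectively, after dividing both sides by $|S| \gg N^{1/2}$.
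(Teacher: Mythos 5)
Your proposal is correct and follows essentially the same route as the paper: the identity $\sum_x f(x)1_P(x)$ with $f = 1_S - \tfrac{|S|}{N}1_{[N]}$, Parseval plus H\"older to split off $\bignorm{\hat f}_\infty$ and $\bignorm{\hat 1_P}_1$, Theorem~\ref{thm:sidon_pseudorandom} for the former and the standard $O(\log|P|) \ll_\eps N^\eps$ bound (the paper's Lemma~\ref{lem:bound_interval}) for the latter. The only cosmetic difference is that you reduce a general progression to an interval via a substitution in the integral rather than via the paper's change of variables in the summand, but the content is identical.
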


Theorem \ref{thm:equi-prog} follows from Theorem \ref{thm:sidon_pseudorandom} by the Erd\H{o}s-Tur\'an inequality (see, for example, \cite[Corollary 1.1]{MontgomeryTen}). We prove a cheap version of this inequality that suffices for our purposes. We begin with a standard  estimate for the $L^1$ norm of the Fourier transform of a progression.
\begin{lemma}
  \label{lem:bound_interval}
  Let $P \subset \Z$ be an arithmetic progression. Then
  \[
    \int_\T \left|\hat{1}_P(\alpha)\right| \intd \alpha \ll \log(|P|).
  \]
\end{lemma}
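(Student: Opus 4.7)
The plan is to compute $\hat{1}_P$ explicitly as a geometric sum, reduce to the case of an interval by a change of variables on $\T$, and then estimate the $L^1$ norm of the resulting Dirichlet-type kernel in the standard fashion.

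First I would write $P = \{a + jd : 0 \leq j < L\}$ with $L = |P|$ and compute
\[
\hat{1}_P(\alpha) = e(\alpha a)\sum_{j=0}^{L-1} e(\alpha d j) = e(\alpha a)\,\frac{e(\alpha d L)-1}{e(\alpha d)-1},
\]
so that $|\hat{1}_P(\alpha)| = |\sin(\pi \alpha d L)/\sin(\pi \alpha d)|$. The phase $e(\alpha a)$ disappears under the absolute value, and because the integrand is $1$-periodic and $d$ is a non-zero integer, the substitution $\beta = d\alpha \pmod 1$ is measure-preserving on $\T$. This reduces the problem to the estimate
\[
\int_\T \biggabs{\frac{\sin(\pi\beta L)}{\sin(\pi\beta)}}\,\intd\beta \ll \log L,
\]
i.e.\ one only needs the $L^1$-bound for the Fourier transform of $[L]$.

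Next I would use the elementary inequalities $|\sin(\pi\beta L)| \leq \min(\pi L\norm{\beta}_\T,\,1)$ and $|\sin(\pi\beta)| \geq 2\norm{\beta}_\T$ on $\T$ to obtain the pointwise bound
\[
\biggabs{\frac{\sin(\pi\beta L)}{\sin(\pi\beta)}} \leq \min\brac{L,\ \recip{2\norm{\beta}_\T}}.
\]
Then I would split the integral according to whether $\norm{\beta}_\T \leq 1/L$ or not. The first region contributes at most $L \cdot (2/L) = O(1)$, while the second contributes
\[
\int_{1/L}^{1/2} \frac{\intd\beta}{\beta} = \log(L/2) \ll \log L,
\]
giving the claimed bound.

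There is no real obstacle here; the only mild care required is to note that the change of variables $\beta = d\alpha$ is measure-preserving on $\T$ (not just Lebesgue-bijective), so that the $L^1$ norm on $\T$ is genuinely independent of the common difference $d$. The degenerate case $|P|=1$ is trivial, so one may freely assume $L \geq 2$ when taking logarithms.
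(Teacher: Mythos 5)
Your proof is correct and follows essentially the same route as the paper: reduce to the interval case by a change of variables, establish the pointwise bound $|\hat{1}_P(\alpha)| \ll \min(|P|, \|\alpha\|_\T^{-1})$, and integrate. The only cosmetic difference is that you evaluate $\int_{1/L}^{1/2} \beta^{-1}\,\intd\beta$ directly where the paper uses a dyadic decomposition, and you spell out explicitly that the map $\alpha \mapsto d\alpha$ is measure-preserving on $\T$ (the paper simply says ``with a suitable change of variables''); both are fine.
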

\begin{proof}
  With a suitable change of variables, we may assume that $P = \{1, \dots,
  |P|\}$. Let us first prove a pointwise bound. Summing the
  geometric series gives
  \[
    \left|\hat{1}_P(\alpha)\right| \ll \frac{1}{|1-e(\alpha)|} = \frac{1}{|e(-\alpha/2) -
    e(\alpha/2)|} = \frac{1}{|2\sin(\pi \alpha)|} \ll \frac{1}{\|\alpha\|}, 
  \]
  where we have used that $2|\alpha| \leq |\sin(\pi\alpha)|$ for $\alpha \in [-1/2,
  1/2]$. Thus,
  \[
    \left|\hat{1}_P(\alpha)\right| \ll \min\left(|P|, \frac{1}{\|\alpha\|}\right).
  \]
  
  We now use dyadic decomposition to complete the proof. Let
  \[
    A_k = \left\{\alpha \in \T \colon \frac{2^{k-1}}{|P|} \leq \|\alpha\| \leq
    \frac{2^{k}}{|P|}\right\}
  \]
  for $k = \left\{ 1, \dots, \lceil\log_2(|P|) \rceil \right\}$ and $A_0 = \left\{
  \alpha \in \T \colon \|\alpha\| \leq 1/|P| \right\}$. For $k \geq 1$ we have
  \[
    \int_{A_k} \left|\hat{1}_P(\alpha)\right| \intd \alpha \leq  \int_{A_k}
    \frac{1}{\|\alpha\|} \intd \alpha \leq \frac{|P|\meas(A_k)}{2^{k-1}} \ll 1.
  \]
  On the other hand,
  \[
    \int_{A_0}\left|\hat{1}_P(\alpha)\right| \intd \alpha \ll |A_0||P| \ll 1.
  \]
  Since $\T \subset \bigcup_k A_k$, the claimed bound follows.
\end{proof}
We are now able to deduce Theorem \ref{thm:equi-prog} from Theorem \ref{thm:sidon_pseudorandom}.
\begin{proof}[Proof of Theorem \ref{thm:equi-prog}]
  Using orthogonality we have
  \begin{align*}
     |S\cap P| - \tfrac{|[N]\cap P||S|}{N} &= \sum_x 1_{P\cap [N]}(x)\brac{1_{S}(x) -
    \tfrac{|S|}{N}1_{[N]}(x)}\\
    &= \int_{\T} \hat{1}_{P\cap [N]}(\alpha) \brac{\hat{1}_{S} - \tfrac{|S|}{N}\hat{1}_{[N]}}(-\alpha)\intd\alpha.
  \end{align*}
  The result follows from Lemma \ref{lem:bound_interval} and
  the Fourier uniformity obtained in Theorem \ref{thm:sidon_pseudorandom}.
\end{proof}

\section{Equidistribution in Bohr neighbourhoods}

One can prove Corollaries \ref{cor:BohrEqui} and \ref{cor:EquiBohr} using Erd\H{o}s--Tur\'an type arguments, see for instance \cite[Chapter 1]{MontgomeryTen}.  We opt for the following cruder trigonometric approximation, a proof of which can be found in Appendix \ref{app:trig-approx}.

\begin{lemma}[Trigonometric approximation]\label{lem:trig_approx}
Let $F : \T^d \to [0,1]$ have Lipschitz constant $K\geq 1$ with respect to the metric 
\begin{equation}\label{eq:TMetric}
\max_j \norm{\alpha_j - \beta_j}_{\T}.
\end{equation}
For any $\eps > 0$ there exists a trigonometric polynomial $F_\eps : \T^d \to [0,1]$ with $\norm{F-F_\eps}_\infty \leq \eps$ and such that 
$$
F_\eps(\alpha) = \sum_{|m_i|\leq M} \hat{F_{\varepsilon}}(m)e(m \cdot \alpha)
$$
with $M \ll K^2\eps^{-3}$.
\end{lemma}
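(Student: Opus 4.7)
The plan is to set $F_\eps := F * \mu_M$, where $\mu_M\colon \T^d \to [0,\infty)$ is the $d$-fold product of the one-dimensional Fej\'er kernel
\[
K_M(\beta) := \sum_{|m|\leq M}\brac{1-\tfrac{|m|}{M+1}}e(m\beta) = \recip{M+1}\biggabs{\frac{\sin\brac{\pi(M+1)\beta}}{\sin(\pi\beta)}}^2.
\]
Since $\mu_M(\beta) = \prod_{j=1}^d K_M(\beta_j)$ is a non-negative trigonometric polynomial with Fourier support in $\set{m\in\Z^d : \max_j |m_j|\leq M}$, the convolution $F_\eps$ automatically has the required Fourier expansion. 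The normalisation $\int_{\T^d}\mu_M = 1$, together with $\mu_M\geq 0$ and $F$ being $[0,1]$-valued, forces $F_\eps$ to take values in $[0,1]$ as well.

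To bound $\norm{F-F_\eps}_\infty$, I will use that convolution against a probability density is an averaging operation, then invoke the Lipschitz hypothesis with respect to \eqref{eq:TMetric}:
\[
|F_\eps(\alpha)-F(\alpha)| = \biggabs{\int_{\T^d}\mu_M(\beta)\bigsqbrac{F(\alpha-\beta)-F(\alpha)}\intd\beta} \leq K\int_{\T^d}\mu_M(\beta)\max_j\norm{\beta_j}_\T\intd\beta.
\]
Bounding the maximum by a sum and exploiting the product structure of $\mu_M$ (whose marginal in each coordinate is $K_M$) reduces matters to a one-dimensional first moment estimate:
\[
K\int_{\T^d}\mu_M(\beta)\max_j\norm{\beta_j}_\T\intd\beta \leq Kd\int_\T K_M(\gamma)\norm{\gamma}_\T\intd\gamma.
\]

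The final step is the standard Fej\'er moment bound, based on $K_M(\gamma)\ll \min\brac{M,\,M^{-1}\norm{\gamma}_\T^{-2}}$. Splitting the integral at $\norm{\gamma}_\T \asymp 1/M$ yields $\int_\T K_M(\gamma)\norm{\gamma}_\T\intd\gamma \ll \log(M)/M$, so choosing $M$ to be a sufficiently large multiple of $Kd\eps^{-1}$ gives $\norm{F-F_\eps}_\infty \leq \eps$. Since $K\geq 1$ and one may assume $\eps\leq 1$, this $M$ lies comfortably within the stated bound $M \ll K^2\eps^{-3}$. There is no serious obstacle; the entire argument is a routine convolution calculation, and the only real design choice is the kernel, where the product Fej\'er kernel is selected to simultaneously control the Fourier degree, preserve the $[0,1]$ range, and keep the first moment small.
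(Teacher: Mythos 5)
Your strategy is exactly the paper's: mollify $F$ by convolving against a $d$-fold product Fej\'er kernel (non-negative, total mass one, Fourier support in $\{m : \max_j|m_j|\leq M\}$), observe that $F*\lambda_M$ therefore takes values in $[0,1]$ and has the required Fourier expansion, and control $\norm{F - F*\lambda_M}_\infty$ via the Lipschitz hypothesis. The only point of divergence is in how the approximation error is estimated: the paper splits the convolution integral at a threshold $\max_j\norm{\beta_j}_\T > \eta$, uses the Lipschitz bound on the near region and the trivial bound $|F(\alpha)-F(\alpha-\beta)|\leq 1$ together with the quantitative decay of $\lambda_M$ on the far region, then optimises $\eta$; you instead integrate the Lipschitz bound directly against the first moment of the kernel. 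These are essentially equivalent calculations.

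There is, however, a small arithmetic slip at the end. The one-dimensional first moment is $\int_\T K_M(\gamma)\norm{\gamma}_\T\,\intd\gamma \ll \log M/M$, not $\ll 1/M$, so with $M$ a fixed large multiple of $Kd\eps^{-1}$ your error bound is of order $\eps\log(Kd/\eps)$, which need not be $\leq\eps$. One fix is to take $M$ of order $Kd\eps^{-1}\log(2+Kd\eps^{-1})$, or more crudely $(Kd\eps^{-1})^2$, which absorbs the logarithm and still sits comfortably within $M\ll K^2\eps^{-3}$ for bounded $d$. Alternatively, truncate the Lipschitz bound at $1$ as the paper does, which removes the logarithm outright and gives the log-free $M^{-1/3}$ calculation. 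The slip does not affect the soundness of the underlying idea — and in fact your first-moment route is sharper in its $K$ and $\eps$ dependence than the paper's truncation, which genuinely needs $M\asymp K^2\eps^{-3}$ — but as written, the claim that $M\asymp Kd\eps^{-1}$ delivers an $\eps$-approximation is not correct.
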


\begin{proof}[Deduction of Corollary \ref{cor:BohrEqui}]
Let $F : \T^d \to [0,1]$ be a 1-bounded function with Lipschitz constant $K\geq 1$ with respect to the metric \eqref{eq:TMetric}. 
We apply Lemma \ref{lem:trig_approx} to obtain a trigonometric approximation $F_\eps : \T^d \to [0,1]$, with $\eps >0$ to be determined. 

Expanding $F_\eps$ in terms of its Fourier coefficients gives
$$
\sum_{x \in S} F_\eps(\alpha x) =  \sum_{ |m_i| \leq M} \hat{F_{\varepsilon}}(m) \sum_{x \in S} e(m\cdot \alpha x).
$$
Approximating $\sum_{x\in S}e(m\cdot \alpha x)$ with $\sum_{x\in [N]}e(m\cdot \alpha x)$, we deduce that there exists an absolute constant $C$ such that 
\begin{equation*}
\biggabs{\sum_{x \in S} F_\eps(\alpha x) -  \tfrac{|S|}{N}\sum_{x \in [N]} F_\eps(\alpha x)}
\leq (CK^2/\eps^{-3})^{d} \bignorm{\hat{1}_S - \tfrac{|S|}{N} \hat{1}_N }_{\infty}.
\end{equation*}
Balancing the above error term with $\eps |S|$, we  take
$$
\eps := \brac{|S|^{-1}\brac{ CK^2}^{d} \bignorm{\hat{1}_S - \tfrac{|S|}{N} \hat{1}_N }_{\infty}}^{\frac{1}{3d+1}}
$$
to yield the asymptotic
\begin{equation*}
\abs{\E_{x \in S} F(\alpha x) - \E_{x \in [N]} F(\alpha x)}\\ \ll
\brac{K^{2d}\brac{\abs{\tfrac{|S|}{N^{1/2}}-1} + N^{-1/6}}^{1/2}}^{\frac{1}{3d+1}},
\end{equation*}
on employing Theorem \ref{thm:sidon_pseudorandom}.
\end{proof}

\begin{proof}[Deduction of Corollary \ref{cor:EquiBohr}]
Let $\eps > 0$ be a small quantity to be determined. Let $F_1 : \T \to [0,1]$ be a piecewise linear function with $F_1(\alpha) = 1$ on $[-\rho, \rho]$, $F_1(\alpha) = 0$ on $\T\setminus [-\rho-\eps,\rho+\eps]$ and with Lipschitz constant at most $\eps^{-1}$. By a telescoping identity, the function $F(\alpha) := F_1(\alpha_1) \dotsm F_1(\alpha_d)$ has Lipschitz constant at most $d\eps^{-1}$. Hence by Corollary \ref{cor:BohrEqui} we have
\begin{multline*}
\E_{x \in S} 1_{B}(x) \leq \E_{x \in S} F\brac{\alpha x}\\
=  \E_{x \in [N]} F\brac{\alpha x}+ O\brac{(d/\eps)^{\frac{2}{3}}\brac{\abs{\tfrac{|S|}{N^{1/2}}-1} + N^{-1/6}}^{\frac{1}{8d}}}.\end{multline*}

Since $F$ is supported on $B(\alpha, \rho + \eps)$, regularity (Definition \ref{def:reg-bohr}) ensures that
$$
\sum_{x \in [N]} F\brac{\alpha x} \leq |B| + O\brac{ d\rho^{-1} \eps N}.
$$
Therefore
\begin{equation*}
\E_{x \in S} 1_{B}(x) \leq \tfrac{|B|}{N}\\
+   O\brac{d\rho^{-1} \eps +(d/\eps)^{\frac{2}{3}}\brac{\abs{\tfrac{|S|}{N^{1/2}}-1} + N^{-1/6}}^{\frac{1}{8d}}}.
\end{equation*}
Choosing $\eps$ to balance error terms, then bounding exponents somewhat crudely, we obtain
\begin{equation*}
\E_{x \in S} 1_{B}(x) \leq \tfrac{|B|}{N}\\
+   O\brac{d\rho^{-1}\brac{\abs{\tfrac{|S|}{N^{1/2}}-1} + N^{-1/6}}^{\frac{1}{14d}}}.
\end{equation*}

The corresponding lower bound is proved analogously. 
\end{proof}

\section{Partition regularity}
Our deduction of Theorem \ref{thm:partition-regularity} from Theorem \ref{thm:sidon_pseudorandom} requires two additional results, the first being the following transference principle for colourings, a proof of which can be found in Appendix \ref{app:dense-model}.
\begin{lemma}[Dense model lemma]\label{lem:dense-model}
Let $0< \eps \leq 1$ and $\nu : [N] \to [0, \infty)$ be such that there exist functions $f_i : [N]\to [0,\infty)$ with $f_1 + \dots + f_r \leq \nu$ and such that for any $g_i :[N] \to [0, \infty)$ satisfying $g_1 + \dots + g_r \leq 1_{[N]}$ there exists $i$ with
$$
\bignorm{\hat{f}_i - \hat{g}_i}_\infty > \eps N.
$$
Then 
\begin{equation}\label{eq:non-unif}
\bignorm{\hat{\nu}- \hat{1}_{[N]}}_\infty \gg_{\eps,r} N.
\end{equation}
\end{lemma}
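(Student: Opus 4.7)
I will prove the contrapositive: suppose $\|\hat{\nu}-\hat{1}_{[N]}\|_\infty \leq \eta N$ with $\eta = \eta(\eps, r) > 0$ to be chosen small, and fix any $f_1, \dots, f_r \geq 0$ with $\sum_i f_i \leq \nu$. The goal is to produce $g_i \geq 0$ with $\sum_i g_i \leq 1_{[N]}$ and $\|\hat f_i - \hat g_i\|_\infty \leq \eps N$ for every $i$. Write $\mathcal{G} := \{(g_i) : g_i \geq 0,\ \sum_i g_i \leq 1_{[N]}\}$.

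The argument rests on minimax duality. Using the identity $\|\hat h\|_\infty = \sup_{\psi \in \mathcal{T}}\langle h, \psi\rangle$ with $\mathcal{T} := \{x \mapsto \Re(c\,e(\alpha x)) : \alpha \in \T,\ |c|=1\}$, and encoding $\max_i$ via the probability simplex $\Delta_r$, the negation of the desired conclusion becomes
\[
\min_{g \in \mathcal{G}} \max_{\phi \in \Phi}\sum_{i=1}^r \langle f_i - g_i, \phi_i\rangle > \eps N,
\]
where $\Phi := \mathrm{conv}\{(\lambda_i \psi_i)_{i=1}^r : \lambda \in \Delta_r,\ \psi_i \in \mathcal{T}\}$. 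Both $\mathcal{G}$ and $\Phi$ are convex compact subsets of $\R^{rN}$ and the objective is bilinear, so Sion's minimax theorem swaps the order and produces $\phi^* \in \Phi$ with $\min_{g \in \mathcal{G}} \sum_i \langle f_i - g_i, \phi_i^*\rangle > \eps N$. Computing the inner minimum explicitly (for each $x \in [N]$, the optimal choice allocates unit mass to the coordinate $i$ maximising $\phi_i^*(x)$, provided this maximum is positive) yields $\sum_i \langle f_i, \phi_i^*\rangle - \langle 1_{[N]}, \Psi\rangle > \eps N$ where $\Psi(x) := (\max_i \phi_i^*(x))_+$. The pointwise bound $\sum_i f_i(x)\phi_i^*(x) \leq \nu(x) \Psi(x)$, valid since $f_i \geq 0$, $\sum f_i \leq \nu$ and $\phi_i^*(x) \leq \Psi(x)$ whenever $\Psi(x) > 0$, then gives the key inequality
\[
\langle \nu - 1_{[N]}, \Psi\rangle > \eps N.
\]

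To extract a lower bound on $\|\hat\nu - \hat 1_{[N]}\|_\infty$, I approximate $\Psi$ by a trigonometric polynomial of controlled Wiener-algebra ($A$-)norm. Each $\phi_i^*$ is a convex combination of characters, so $|\phi_i^*| \leq 1$ and $\|\phi_i^*\|_A \leq 1$. Writing $\Psi = F(\phi_1^*, \dots, \phi_r^*)$ with $F(y) := (\max_i y_i)_+$, a standard polynomial approximation on $[-1,1]^r$ (for instance, a Bernstein polynomial after an affine shift) produces $P_\delta$ of degree $d(\delta, r)$ with $\|F - P_\delta\|_\infty \leq \delta$. Submultiplicativity of $\|\cdot\|_A$ gives $\|P_\delta(\phi_1^*, \dots, \phi_r^*)\|_A \leq C(\delta, r)$, whence the standard duality $|\langle h, \phi\rangle| \leq \|\phi\|_A \|\hat h\|_\infty$ yields $|\langle \nu - 1_{[N]}, P_\delta(\phi^*)\rangle| \leq C(\delta, r)\eta N$. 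Combined with the crude bound $\|\nu\|_1 \leq (1+\eta)N \leq 2N$ (from $\hat\nu(0) \leq N + \eta N$), splitting $\Psi = P_\delta(\phi^*) + (\Psi - P_\delta(\phi^*))$ gives
\[
\eps N < \langle \nu - 1_{[N]}, \Psi\rangle \leq C(\delta, r)\eta N + 3\delta N.
\]
Choosing $\delta := \eps/6$ forces $\eta \geq \eps/(2C(\eps/6, r))$, yielding $\|\hat\nu - \hat{1}_{[N]}\|_\infty \gg_{\eps, r} N$.

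The main obstacle is this final polynomial approximation step: one needs a polynomial approximant to the Lipschitz-but-non-smooth function $(\max_i y_i)_+$ whose coefficient sum, and hence Wiener-algebra norm after substitution, is controlled explicitly in terms of $\delta$ and $r$. Bernstein polynomials handle this cleanly at the cost of fairly large implicit constants; a sharper dependence would require a more delicate choice of approximant.
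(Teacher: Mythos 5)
Your proof is correct and tracks the paper's argument step for step; the difference lies in how the convex-duality stage is implemented. The paper applies a separating hyperplane theorem to the point $(1+\tfrac{\eps}{2})^{-1}(f_1,\dots,f_r)$ and the convex body of tuples $(g_i+h_i)$ with $g_i \geq 0$, $\sum_i g_i \leq 1_{[N]}$ and $\bignorm{\hat{h}_i}_\infty \leq \tfrac{1}{4}\eps N$; the slack functions $h_i$ are what force the separating functionals $\phi_i$ to satisfy $\norm{\phi_i}^* \leq 4/\eps$, and after a renormalisation the paper arrives at the same key inequality $\sum_x \bigsqbrac{\nu(x)-1_{[N]}(x)}\max\set{\phi_1(x),\dots,\phi_r(x),0} > \tfrac{1}{2}\eps N$ that you derive. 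You instead invoke Sion's minimax theorem over $\mathcal{G}\times\Phi$ and, crucially, build $\Phi$ from the convex hull of simplex-weighted character tuples, so the boundedness $\|\phi_i^*\|_A \leq 1$ comes for free. This sidesteps the slack variables and the $(1+\tfrac{\eps}{2})^{-1}$ rescaling, and hands you the cleaner domain $[-1,1]^r$ for the polynomial approximation in place of $[-4/\eps,4/\eps]^r$. From there the two proofs coincide: approximate $(\max_i y_i)_+$ uniformly by a polynomial, control the Wiener norm of the substituted polynomial by submultiplicativity, apply $\abs{\langle h,\phi\rangle}\leq\|\phi\|_A\bignorm{\hat h}_\infty$, and invoke $\|\nu\|_1 \leq 2N$. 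The paper packages the Wiener-norm bookkeeping inside a dual norm $\norm{\cdot}^*$ developed in an auxiliary lemma and pigeonholes to a single monomial of the approximating polynomial, whereas you bound the whole polynomial at once, but the analytic content is identical. In short: same Conlon--Gowers transference skeleton, with a slightly tidier duality step on your side.
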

The second additional result underlying Theorem \ref{thm:partition-regularity} is a lower bound on the number of monochromatic solutions to a partition regular equation in an interval.
\begin{lemma}[Counting monochromatic solutions in an interval of integers]\label{lem:FGR}
Let $c_1$, \dots, $c_s \in \Z\setminus\set{0}$ and suppose that there is a non-empty index set $I \subset [s]$ satisfying $\sum_{i \in I } c_i = 0$.  Then for any functions $g_1, \dots, g_r : [N] \to [0,\infty)$ with $1_{[N]} = g_1 + \dots + g_r$, either $N\ll_{c_1, \dots, c_s, r} 1$ or there exists $g_j$ satisfying 
\begin{equation}\label{eq:FGR}
\sum_{c_1x_1 + \dots + c_sx_s = 0} g_j(x_1) \dotsm g_j(x_s) \gg_{c_1, \dots, c_s, r} N^{s-1}.
\end{equation}
\end{lemma}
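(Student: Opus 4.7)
The plan is to reduce to the case of an honest $r$-colouring of $[N]$ via pigeonhole, and then to invoke a quantitative monochromatic solution-counting theorem in the spirit of Frankl, Graham and R\"odl.

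For the reduction, I would argue as follows. Since $g_1(x) + \dots + g_r(x) = 1$ for every $x \in [N]$, the pigeonhole principle produces, for each $x$, at least one index $i$ with $g_i(x) \geq 1/r$. Defining $c : [N] \to [r]$ by letting $c(x)$ be the minimal such index and setting $C_j := c^{-1}(j)$, the sets $C_j$ partition $[N]$ and satisfy $g_j \geq r^{-1}1_{C_j}$. Consequently
\[
\sum_{c_1 x_1 + \dots + c_s x_s = 0} g_j(x_1) \dotsm g_j(x_s) \geq r^{-s} \sum_{c_1 x_1 + \dots + c_s x_s = 0} 1_{C_j}(x_1) \dotsm 1_{C_j}(x_s),
\]
so it suffices to find a single colour class $C_j$ containing $\gg_{c_1,\dots,c_s,r} N^{s-1}$ solutions.

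For the counting step, I would appeal to the theorem of Frankl, Graham and R\"odl on quantitative partition regularity, which asserts that any equation $\sum c_i x_i = 0$ satisfying Rado's condition (some non-empty subset of coefficients summing to zero) admits $\gg_{c,r} N^{s-1}$ monochromatic solutions in every $r$-colouring of $[N]$, provided $N$ is sufficiently large in terms of $c_1, \dots, c_s, r$; the ``$N$ small'' alternative in the lemma absorbs the residual finite cases. A self-contained argument would proceed instead by locating a dense colour class by pigeonhole, applying Szemer\'edi's theorem in Varnavides' supersaturated form to extract many long monochromatic arithmetic progressions inside it, and then parametrising solutions to $\sum c_i x_i = 0$ via these progressions, exploiting the shift-invariance of $\sum_{i \in I} c_i x_i$ that follows from $\sum_{i \in I} c_i = 0$.

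The main obstacle lies in the counting step when $\sum_{i=1}^s c_i \neq 0$, so that $I \subsetneq [s]$: in this regime the equation is not density-regular (Roth's criterion fails), and a pure density argument within a single colour class is insufficient. One must genuinely exploit the colouring through a Ramsey-theoretic device such as van der Waerden's theorem, and the subsequent bookkeeping required to recover the full $N^{s-1}$ lower bound, as opposed to a weaker power of $N$ reflecting the dimension of a single monochromatic substructure, is the most delicate component of the argument.
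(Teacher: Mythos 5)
Your proposal is correct and follows essentially the same route as the paper: a pigeonhole reduction from the fractional decomposition $g_1+\dots+g_r=1_{[N]}$ to an honest $r$-colouring with $g_j \geq r^{-1}1_{C_j}$, followed by an appeal to Frankl--Graham--R\"odl's quantitative theorem for the monochromatic solution count $\gg N^{s-1}$. The paper treats the citation of \cite[Theorem 1]{FGRQuantitative} as a black box, so your additional remarks on the self-contained counting argument go beyond what the paper attempts; your main line of proof matches it exactly.
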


\begin{proof}
When each $g_i$ is a characteristic function of a set $C_i \subset [N]$, the result is a special case of Frankl, Graham and R\"odl \cite[Theorem 1]{FGRQuantitative}. In general, for each $x \in [N]$ fix an index $i = i(x)$ such that $g_i(x) \geq 1/r$. At least one such index exists by the pigeon-hole principle.  On setting $C_i = \set{x\in [N] : i(x) = i}$ we obtain a colouring, and the lower bound \eqref{eq:FGR} follows on employing \cite[Theorem 1]{FGRQuantitative}.
\end{proof}
\begin{proof}[Proof of Theorem \ref{thm:partition-regularity}]
Let $S \subset [N]$ be a Sidon set and $S = C_1 \cup \dots \cup C_r$.   Writing $\delta = \delta_{c_1,\dots, c_s, r} > 0$ for the implicit constant in \eqref{eq:FGR}, suppose that 
\begin{equation}\label{eq:nonFGR}
\sum_{c_1x_1 + \dots + c_sx_s = 0} 1_{C_j}(x_1) \dotsm 1_{C_j}(x_s) \leq \trecip{2}\delta |S|^{s}N^{-1}\qquad (1\leq j \leq r).
\end{equation}
Define $f_i := N|S|^{-1} 1_{C_i}$, so that $f_1 + \dots + f_{r} = N|S|^{-1} 1_S$.  Then, by Lemma \ref{lem:FGR}, we either have $N \ll_{c_1,\dots, c_s, r} 1$, or for any functions $g_j \geq 0$ with $g_1 + \dots + g_r = 1_{[N]}$   there exists $g_j$ such that
\begin{multline}\label{eq:count-diff}
\abs{\sum_{c_1x_1 + \dots + c_sx_s = 0} \sqbrac{g_j(x_1) \dotsm g_j(x_s)- f_j(x_1) \dotsm f_j(x_s)}}\gg_{c_1, \dots, c_s, r} N^{s-1}.
\end{multline}
Given \eqref{eq:count-diff}, by a telescoping identity, there exists $h_1, \dots, h_s \in \set{g_j, f_j, f_j- g_j}$, exactly one of which is equal to $f_j-g_j$, and such that
\begin{multline}
\abs{\sum_{c_1x_1 + \dots + c_sx_s = 0} \sqbrac{g_j(x_1) \dotsm g_j(x_s)- f_j(x_1) \dotsm f_j(x_s)}}\\ \ll_s \abs{\sum_{c_1x_1 + \dots + c_sx_s = 0} h_1(x_1) \dotsm h_s(x_s)}.
\end{multline}
By orthogonality and H\"older's inequality
\begin{multline}\label{eq:holder}
 \abs{\sum_{c_1x_1 + \dots + c_sx_s = 0} h_1(x_1) \dotsm h_s(x_s)} 
 = \abs{ \int_\T \hat{h}_1(c_1\alpha) \dotsm \hat{h}_s(c_s\alpha) \intd\alpha}\\ \leq \bignorm{\hat f_j - \hat g_j}_\infty \max\set{\bignorm{\hat f_j}_{s-1}, \bignorm{\hat g_j}_{s-1}}^{s-1}.
\end{multline}
Since $s-1 \geq 2$ and $0\leq g_j \leq 1_{[N]}$, Parseval's identity gives that
$$
\bignorm{\hat g_j}_{s-1}^{s-1} \leq N^{s-2}.
$$
Since $s-1 \geq 4$ and $0 \leq f_j \leq N|S|^{-1} 1_S$, orthogonality and the Sidon property give that 
$$
\bignorm{\hat f_j}_{s-1}^{s-1} \leq N^{s-1} |S|^{-4} \sum_{x-x' = y-y'} 1_S(x)1_S(x') 1_S(y) 1_S(y') \leq 2N^{s-1}|S|^{-2}.
$$
Supposing that $|S| \geq \recip{100}N^{1/2}$, the latter quantity is $O(N^{s-2})$. We may assume that $|S| \geq \recip{100}N^{1/2}$ for otherwise $||S|-N^{1/2}| \gg N^{1/2}$.

From the above deliberations, we conclude that if \eqref{eq:nonFGR} holds, then either $N \ll_{c_1, \dots, c_s, r} 1$, or $||S|-N^{1/2}| \gg N^{1/2}$,  or for any $g_1, \dots, g_r \geq 0$ with $g_1 + \dots + g_r = 1_{[N]}$ there exists $g_j$ such that 
\begin{equation}\label{eq:no-model}
\bignorm{\hat f_j - \hat g_j}_\infty \gg_{c_1, \dots, c_s, r} N.
\end{equation}
Henceforth we assume that we are not in the situation that $N \ll_{c_1, \dots, c_s, r} 1$ or $||S|-N^{1/2}| \gg N^{1/2}$. Let $\eta = \eta(c_1,\dots, c_r, r) > 0$ denote the implicit constant in \eqref{eq:no-model}. If it is the case that there exists $g_1, \dots, g_r \geq 0$ with $g_1 + \dots + g_r = 1_{[N]}$ such that for all $1\leq j \leq r-1$ we have
$$
\bignorm{\hat f_j - \hat g_j}_\infty \leq \tfrac{\eta}{2r}  N,
$$
then \eqref{eq:no-model} holds with $j = r$, so by the triangle inequality 
$$
\bignorm{N|S|^{-1}\hat 1_S - \hat 1_{[N]}}_\infty \geq \trecip{2} \eta N \gg_{c_1, \dots, c_s, r} N.
$$
Let us show that this conclusion also holds when for any $g_1, \dots, g_r \geq 0$ with $g_1 + \dots + g_r = 1_{[N]}$ there exists $1\leq j \leq r-1$ such that
$$
\bignorm{\hat f_j - \hat g_j}_\infty > \tfrac{\eta}{2r}  N.
$$
Since
\begin{multline*}
\set{(g_1, \dots, g_r) : g_1 + \dots + g_r = 1_{[N]} \text{ and } g_i \geq 0\text{ for all }i}\\
 = \set{(g_1, \dots, g_{r-1}, 1_{[N]}- g_r) : g_1 + \dots + g_{r-1} \leq 1_{[N]} \text{ and } g_i \geq 0\text{ for all }i},
\end{multline*}
 we may apply the dense model lemma (Lemma \ref{lem:dense-model}) to conclude that 
$$
\bignorm{N|S|^{-1}\hat 1_S - \hat 1_{[N]}}_\infty \gg_{c_1, \dots, c_s, r} N.
$$
Hence by Theorem \ref{thm:sidon_pseudorandom} we have
\begin{equation}\label{eq:final-ineq}
N^{1/4}\abs{|S|-N^{1/2}}^{1/2} + N^{5/12} \gg_{c_1, \dots, c_s, r} |S|.
\end{equation}
Again assuming $|S| \geq \trecip{100}N^{1/2}$ (as we may), \eqref{eq:final-ineq} implies that either $N \ll_{c_1, \dots, c_s, r} 1$ or $\abs{|S|-N^{1/2}} \gg_{c_1, \dots, c_s, r} N^{1/2}$.
\end{proof}

\section{Improving Fourier uniformity}\label{sec:improving-fourier}

In  previous sections we have seen that the quality of Fourier uniformity dictates the level of equidistribution of an extremal Sidon set. In this section we give a modified proof of Theorem \ref{thm:sidon_pseudorandom} with an increased power saving in the quality of Fourier uniformity. This is accomplished by incorporating an estimate of Cilleruelo \cite[Theorem 1.1]{CillerueloGaps} on the level of equidistribution in short intervals.
\begin{theorem}[Cilleruelo]\label{thm:uniform_distribution}
  Let $S \subset [N]$ be a Sidon set and 
   $I \subset [N]$  an interval. Then
  \[
    \abs{ |S \cap I| - \frac{|I||S|}{N} } \ll \brac{N^{1/4} + |I|^{1/2}
    N^{-1/8}}\brac{1+\brac{1-\tfrac{|S|}{N^{1/2}}}_{+}^{1/2} N^{1/8}},
  \]
  where $x_+ = \max(0, x)$.
\end{theorem}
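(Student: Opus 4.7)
The plan is to adapt Cilleruelo's argument from \cite{CillerueloGaps}, which parallels the van der Corput approach of Theorem \ref{thm:sidon_pseudorandom} but is localised to the interval $I$. I would apply the differencing inequality \eqref{eq:vandercorput} to the function $f(x) := 1_{S \cap I}(x) - \tfrac{|S|}{N}1_I(x)$, with a Fej\'er width $H$ to be chosen. Expanding $\sum_h \mu_H(h)\sum_x f(x)f(x+h)$ produces four cross-terms in $f_1 := 1_{S \cap I}$ and $f_2 := \tfrac{|S|}{N}1_I$, directly analogous to the four terms appearing in our proof of Theorem \ref{thm:sidon_pseudorandom}.

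The pure Sidon term $f_1(x)f_1(x+h)$ is handled by Lemma \ref{lem:sidondif_sum_lower_bound} (applied with $S$ replaced by $S\cap I$), which extracts a main term of order $|S\cap I|^2/(|I|+H)$ up to error $O(|S\cap I|/\lfloor H\rfloor)$; this should cancel against the contribution from $f_2(x)f_2(x+h)$, which introduces only an $O(H)$ boundary loss from the interval. The mixed cross-terms reduce to estimating $|S \cap I'|$ for sub-intervals $I' \subset I$ of length at most $|I|$, and each is controlled by the Erd\H{o}s--Tur\'an bound \eqref{eq:SidonSize}, just as in the proof of Theorem \ref{thm:sidon_pseudorandom}.

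Balancing the resulting error terms by a careful optimisation of $H$ should then produce the pair of exponents $N^{1/4}$ and $|I|^{1/2}N^{-1/8}$ appearing in the conclusion, the hybrid form of the bound reflecting a transition around $|I| \asymp N^{3/4}$. The extra factor $1 + (1 - |S|/N^{1/2})_+^{1/2} N^{1/8}$ would emerge from propagating the error in Lemma \ref{lem:sidondif_sum_lower_bound} through the argument when $|S|$ is much smaller than $N^{1/2}$, which is precisely the regime in which the lower bound $|S|^2/(N+H)$ on the Fej\'er average of differences degrades.

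The main obstacle, and the essence of Cilleruelo's refinement over Theorem \ref{thm:sidon_pseudorandom}, is the delicate cancellation of the $H$-sized boundary contributions from the interval $I$: naive bookkeeping only yields the weaker $N^{1/2}$-type saving of Theorem \ref{thm:equi-prog}, whereas achieving the sharp $N^{1/4}$ exponent requires carefully exploiting structure in how the four cross-terms cancel against one another rather than absorbing boundary losses one at a time.
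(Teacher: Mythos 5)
This theorem is not proved in the paper: it is quoted verbatim as \cite[Theorem~1.1]{CillerueloGaps} and used as a black box in Section~\ref{sec:improving-fourier}, so there is no ``paper's own proof'' to compare against. Evaluating your sketch on its own merits, it has a genuine gap. In the expansion of $\sum_h \mu_H(h)\sum_x f(x)f(x+h)$ with $f = f_1 - f_2$, the diagonal terms $f_1 f_1$ and $f_2 f_2$ both carry a $+$ sign, while the cross terms $f_1 f_2$, $f_2 f_1$ carry $-$; so the ``main term'' $|S\cap I|^2/(|I|+H)$ you extract from the pure Sidon term does not cancel against the $f_2 f_2$ contribution $\approx |S|^2|I|/N^2$. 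Worse, the sign of Lemma~\ref{lem:sidondif_sum_lower_bound} is wrong for this purpose: it gives a \emph{lower} bound on $\sum_{h\ne 0}\mu_H(h)1_{(S\cap I)-(S\cap I)}(h)$, whereas bounding the van der Corput right-hand side from above requires an \emph{upper} bound on the $f_1 f_1$ term (the correct trivial upper bound is $1$, since $\mu_H$ is a probability measure). If you nonetheless substitute the lower-bound ``main term'' $|S\cap I|^2/(|I|+H)$ into the alternating sum, the four main terms telescope to precisely $(|S\cap I| - |S||I|/N)^2/(|I|+H)$, so the inequality collapses to $E^2/(|I|+H)\le E^2/(|I|+H)+\text{errors}$ and gives no information about $E$.

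Carrying out the corrected bookkeeping (trivial upper bound on the $f_1 f_1$ term, Erd\H{o}s--Tur\'an for the boundary losses in the cross terms) yields a quadratic inequality in $E = |S\cap I| - |S||I|/N$ whose leading constant term is $1 - |S|^2|I|/N^2$, which is $\gg 1$ for $|I| \ll N$; optimising $H$ then recovers only $|E| \ll |I|^{1/2}$, i.e.\ the Erd\H{o}s--Tur\'an bound itself, and for $|I|\asymp N$ one gets roughly $N^{5/12}$ via Theorem~\ref{thm:equi-prog}, both strictly weaker than Cilleruelo's $N^{1/4}+|I|^{1/2}N^{-1/8}$ (compare $N^{3/8}$ at $|I|=N$). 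Cilleruelo's actual improvement is not a localised van der Corput argument; it is an elementary second-moment estimate on the sliding-window counts $y\mapsto|S\cap(y+[H])|$, in which the Sidon property controls $\sum_y|S\cap(y+[H])|^2$ directly by counting representable differences, and the final bound follows from Chebyshev/Cauchy--Schwarz on the variance of the window counts. That is a genuinely different mechanism from the Fourier-side differencing you propose, and it is precisely why the paper takes Theorem~\ref{thm:uniform_distribution} as imported input rather than rederiving it from Theorem~\ref{thm:sidon_pseudorandom} (which would in any case be circular, as the paper notes in the remark following Theorem~\ref{thm:improve-fourier-unif}).
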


\begin{corollary}
  \label{cor:uniform_distribution}
  Let $S \subset [N]$ be a Sidon set and
   $I \subset [N]$ an interval with $|I| \leq  N^{3/4}$. Then
  \[
    |S \cap I| \ll N^{1/4} + \abs{1-\tfrac{|S|}{N^{1/2}}}^{1/2}N^{3/8}.
  \]
\end{corollary}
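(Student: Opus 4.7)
The strategy is to apply the triangle inequality
\[
|S\cap I|\leq \frac{|I||S|}{N} + \abs{|S\cap I| - \frac{|I||S|}{N}}
\]
and then estimate each piece separately using the hypothesis $|I|\leq N^{3/4}$ together with Cilleruelo's Theorem \ref{thm:uniform_distribution}.

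For the \emph{main term}, I would use the Erd\H{o}s--Tur\'an bound \eqref{eq:SidonSize} (or, more simply, the trivial consequence $|S|\ll N^{1/2}$ that follows from it), to obtain
\[
\frac{|I||S|}{N}\ll \frac{N^{3/4}\cdot N^{1/2}}{N}=N^{1/4},
\]
which is already absorbed into the first error term of the claimed bound.

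For the \emph{error term}, Theorem \ref{thm:uniform_distribution} provides
\[
\abs{|S\cap I|-\tfrac{|I||S|}{N}}\ll \brac{N^{1/4}+|I|^{1/2}N^{-1/8}}\brac{1+\brac{1-\tfrac{|S|}{N^{1/2}}}_+^{1/2}N^{1/8}}.
\]
Since $|I|\leq N^{3/4}$, the first factor satisfies $N^{1/4}+|I|^{1/2}N^{-1/8}\leq N^{1/4}+N^{3/8-1/8}=2N^{1/4}$. Expanding the product and bounding $(1-|S|/N^{1/2})_+\leq \abs{1-|S|/N^{1/2}}$ yields
\[
\abs{|S\cap I|-\tfrac{|I||S|}{N}}\ll N^{1/4} + \abs{1-\tfrac{|S|}{N^{1/2}}}^{1/2}N^{3/8}.
\]
Combining the two bounds via the triangle inequality gives precisely the asserted estimate.

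There is no real obstacle here: the corollary is a purely arithmetic manipulation of Cilleruelo's theorem under the additional constraint $|I|\leq N^{3/4}$, which is exactly the threshold at which the second factor $|I|^{1/2}N^{-1/8}$ in Cilleruelo's bound stops dominating $N^{1/4}$. The only point requiring minor care is checking that $|I||S|/N$ does not produce a term larger than $N^{1/4}$, which is immediate from $|S|\ll N^{1/2}$.
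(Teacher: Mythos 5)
Your proof is correct and is the intended derivation: the paper states Corollary \ref{cor:uniform_distribution} without a separate proof precisely because it is the arithmetic consequence you describe, namely bounding $|I||S|/N \ll N^{1/4}$ via $|S|\ll N^{1/2}$ and $|I|\leq N^{3/4}$, and then observing that the same constraint forces the first factor in Theorem \ref{thm:uniform_distribution} to be $\ll N^{1/4}$. Your remark that $|I|\asymp N^{3/4}$ is the crossover point for the two terms in Cilleruelo's first factor is also the right way to understand why this threshold appears.
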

Using this result we can  refine the bounds obtained in
Theorem \ref{thm:sidon_pseudorandom}.
\begin{theorem}\label{thm:improve-fourier-unif}
  Let $S \subset [N]$ be a Sidon set. Then
    \begin{equation}\label{eq:improve-fourier-unif}
        \left\|\hat{1}_{S} - \tfrac{|S|}{N}\hat{1}_{[N]}\right\|_{\infty}
        \ll N^{1/2}\brac{\abs{1-\tfrac{|S|}{N^{1/2}}}+ 
        N^{-1/4}}^{1/2}.
    \end{equation}
    \end{theorem}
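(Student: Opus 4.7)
The plan is to repeat the proof of Theorem~\ref{thm:sidon_pseudorandom} verbatim, making a single substitution: in the step bounding $\sum_x f_1(x)f_2(x+h)$, I will replace the weak Erd\H{o}s-Tur\'an inequality \eqref{eq:weak-part} by Cilleruelo's sharper estimate from Corollary~\ref{cor:uniform_distribution}. Since that corollary requires $|I_h| \leq N^{3/4}$, I will enforce $H \leq N^{3/4}$ throughout and re-balance the parameter $H$ only at the end.

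Keeping the definitions $f_1 := 1_S$, $f_2 := \tfrac{|S|}{N}1_{[N]}$, $f := f_1 - f_2$, van der Corput's inequality \eqref{eq:vandercorput} applied to $\hat{f}$ produces the familiar four-term decomposition. The diagonal $f_1 f_1$ contribution (controlled via the Sidon property and Lemma~\ref{lem:sidondif_sum_lower_bound}) together with the $f_2 f_2$ contribution are handled exactly as before, and after multiplication by $(N+H)$ they yield a bound of
$$
\ll H + N^{3/2}/H + N^{1/2}\abs{|S|-N^{1/2}}.
$$

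The crucial change enters in the cross terms. Writing $\sum_x f_1(x)f_2(x+h) = \tfrac{|S|}{N}(|S| - |S \cap I_h|)$ and applying Corollary~\ref{cor:uniform_distribution} (which applies because $|h| \leq H \leq N^{3/4}$), I obtain
$$
\sum_h \mu_H(h) \bigabs{\tfrac{|S|^2}{N} - \sum_x f_1(x)f_2(x+h)} \ll \tfrac{|S|}{N}\brac{N^{1/4} + \abs{1-\tfrac{|S|}{N^{1/2}}}^{1/2}N^{3/8}}.
$$
After multiplying by $(N+H) \ll N$ and using $|S| \ll N^{1/2}$, the cross-term contribution to $|\hat f(\alpha)|^2$ is $\ll N^{3/4} + \abs{1-|S|/N^{1/2}}^{1/2}N^{7/8}$; the symmetric $f_2 f_1$ term is identical. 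Choosing $H := N^{3/4}$ to balance $H$ against $N^{3/2}/H$, the full estimate becomes
$$
|\hat{f}(\alpha)|^2 \ll N^{3/4} + N^{1/2}\abs{|S|-N^{1/2}} + \abs{1-\tfrac{|S|}{N^{1/2}}}^{1/2}N^{7/8}.
$$

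The last step is to absorb the stray term $\abs{1-|S|/N^{1/2}}^{1/2}N^{7/8}$, which is the only potential obstruction. Observing the identity $\abs{1-|S|/N^{1/2}}^{1/2}N^{7/8} = \brac{N^{1/2}\abs{|S|-N^{1/2}}}^{1/2}\brac{N^{3/4}}^{1/2}$, the AM-GM inequality gives
$$
\abs{1-\tfrac{|S|}{N^{1/2}}}^{1/2}N^{7/8} \leq \trecip{2}\brac{N^{1/2}\abs{|S|-N^{1/2}} + N^{3/4}},
$$
so this term is dominated by the existing main terms, leaving $|\hat{f}(\alpha)|^2 \ll N^{3/4} + N^{1/2}\abs{|S|-N^{1/2}}$. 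Taking square roots and factoring out $N^{1/2}$ then yields \eqref{eq:improve-fourier-unif}. The main (and essentially only) obstacle is the bookkeeping for this absorption step, which must succeed without losing the sharper $N^{-1/4}$ exponent; every other ingredient is a direct transcription of the earlier proof with the new interval bound plugged in.
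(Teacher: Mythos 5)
Your proposal is correct and follows the paper's own approach: take $H:=N^{3/4}$ and replace the Erd\H{o}s--Tur\'an bound \eqref{eq:weak-part} with Cilleruelo's Corollary~\ref{cor:uniform_distribution} in the cross-term estimate. The one place where you add value over the paper's terse ``putting everything together'' is the explicit AM--GM absorption of the stray term $\abs{1-|S|/N^{1/2}}^{1/2}N^{7/8}$; your identity and the resulting bound $\abs{1-|S|/N^{1/2}}^{1/2}N^{7/8}\leq\trecip{2}\bigbrac{N^{1/2}\abs{|S|-N^{1/2}}+N^{3/4}}$ both check out, and the final inequality $|\hat f(\alpha)|^2\ll N^{3/4}+N^{1/2}\abs{|S|-N^{1/2}}$ is equivalent to \eqref{eq:improve-fourier-unif} after taking square roots, since $(a+b)^{1/2}\asymp a^{1/2}+b^{1/2}$ for $a,b\geq 0$.
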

\begin{proof}
  The proof is identical to that given for Theorem
  \ref{thm:sidon_pseudorandom}, albeit taking $H := N^{3/4}$ and replacing our use of \eqref{eq:SidonSize} in \eqref{eq:weak-part} with Corollary \ref{cor:uniform_distribution}. This gives
  $$
  |S \cap I_h| \ll N^{1/4} + \abs{1-\tfrac{|S|}{N^{1/2}}}^{1/2}N^{3/8}.
  $$
  Hence when $i \neq j$ we have
    \begin{multline*}
       \sum_h \mu_H(h) \left||S|^2N^{-1} - \sum_xf_i(x)f_j(x+h)\right|\\ \ll  |S|\brac{N^{-3/4} + \abs{1-\tfrac{|S|}{N^{1/2}}}^{1/2}N^{-5/8}}.
    \end{multline*}

Putting everything together, as in the proof of Theorem \ref{thm:sidon_pseudorandom}, then gives the desired bound.
\end{proof}

\begin{remark}
This second version of our Fourier uniformity bound 
solves a quirk of the previous proof, where we took $H =  N^{2/3} $,
whereas in many results on extremal Sidon sets taking $H =  N^{3/4}
$ appears naturally.

Of course, this new version cannot be used to improve the bounds on uniform
distribution in intervals, since it would give a circular argument. However, it
may be applied to improve our bounds on distribution in residue classes.
\end{remark}


\appendix

\section{The size of a Sidon set}\label{sec:SidonSize}
In this appendix we establish the well-known bound \eqref{eq:SidonSize}, again
using van der Corput's variant of the Cauchy--Schwarz inequality.
Let $S \subset (n, n+N]$ be a Sidon set and $H$ be a positive integer (to be determined).  Applying
\eqref{eq:vandercorput} to $\sum_x 1_S(x)$ we obtain
\[
  |S|^2 \leq \brac{N+\floor{H}} \sum_h \mu_H(h) \sum_x 1_S(x) 1_S(x+h).
\]

Using the defining property of Sidon sets, and the fact that $\mu_H$ is a probability measure, we deduce that 
\begin{equation*}
  |S|^2 \leq  \brac{N+H}\brac{\frac{|S|}{H} + 1}.
\end{equation*}
By the quadratic formula $x^2 \leq bx + c$ only if $x \leq (b + \sqrt{b^2+4c})/2 $, which in turn implies that $x \leq b + \sqrt{c}$.
Hence
\[
|S| \leq \sqrt{N+H} + (N+H)H^{-1} \leq  N^{1/2} + HN^{-1/2} + NH^{-1} + 1.
\]
The bound \eqref{eq:SidonSize} follows on taking, say, $H = \ceil{N^{3/4}}$.

\section{Trigonometric approximation}\label{app:trig-approx}

\begin{definition}
  Given an integrable function $F \colon \T^d \to [0, 1]$, we define its Fourier
  transform to be the function $\hat{F} \colon \Z^d \to \C$ given by
  \[
    \hat{F}(m) = \int_{\alpha \in \T^d} F(\alpha) e(-m \cdot \alpha),
  \]
  where $m \cdot \alpha = m_1 \alpha_1 + \cdots + m_d \alpha_d$.
\end{definition}

\begin{proof}[Proof of Lemma \ref{lem:trig_approx}]
Let $\lambda_M(\alpha) = \lambda_M(\alpha_1) \dotsm \lambda_M(\alpha_d)$ denote the following renormalised Fourier transform of the F\'ejer kernel:
\begin{multline*}
  \sum_m \brac{1 - \frac{|m_1|}{M}}_{+}\dotsm \brac{1 - \frac{|m_d|}{M}}_{+} e(m\cdot\alpha)\\ = M^{-d} \brac{1_{[M]^d}*1_{-[M]^d}}\hat{\ } (\alpha) = M^{-d}\abs{\hat{1}_{[M]^d}(\alpha)}^2.
\end{multline*}
Set 
$$
F_M(\alpha) := F*\lambda_M(\alpha) =  \int_{\T^d} F(\alpha-\beta) \lambda_M(\beta)  \intd\beta.
$$  
One can check that
$$
F*\lambda_M(\alpha) = \sum_m \brac{1 - \frac{|m_1|}{M}}_{+}\dotsm \brac{1 - \frac{|m_d|}{M}}_{+} \hat{F}(m)e(m\cdot\alpha).
$$

We utilise the following three properties of the F\'ejer kernel.
\begin{enumerate}[{\normalfont (a)}]
\item  (Non-negativity) $\lambda_M \geq 0$.
\item  (Mass one) $\int_{\T^d} \lambda_M = 1$.
\item  (Quantitative decay) $\lambda_M(\alpha) \leq M^{-1}\norm{\alpha_j}^{-2} \prod_{i\neq j} \lambda_M(\alpha_i) $.
\end{enumerate}

The first two facts ensure that  $0 \leq F_M \leq 1$, since $0 \leq F \leq 1$. Let us estimate the error $\norm{F - F*\lambda_M}_{\infty}$.  By definition of convolution
\begin{equation*}
\begin{split}
F(\alpha) - F*\lambda_M(\alpha) = \int_{\T^d}  \brac{F(\alpha) - F(\alpha - \beta)} \lambda_M(\beta)  \intd \beta.
\end{split}
\end{equation*}
Since the F\'ejer kernel is non-negative and has integral 1, the Lipschitz continuity of $F$ gives that
$$
\abs{\int_{\max_i |\beta_i| \leq \eta}  \bigbrac{F(\alpha) - F(\alpha - \beta)} \lambda_M(\beta)  \intd \beta} \leq K \eta.
$$
By the quantitative decay estimate 
$$
 \abs{\int_{\max_i |\beta_i| > \eta}  \bigbrac{F(\alpha) - F(\alpha - \beta)} \lambda_M(\beta)  \intd \beta} \\
 \leq   \frac{2}{M\eta^2}.  
$$
Taking $\eta^3 = 1/(KM)$ then gives
$$
\norm{F - F*\lambda_M}_{\infty} \leq 3K^{2/3} M^{-1/3} .
$$
Setting $M = \ceil{27K^{2}\eps^{-3}}$ we have
$
\norm{F - F*\lambda_M}_{\infty} \leq \eps.
$

We note that we may assume that $\eps \leq 1$, so that $M\ll K^2\eps^{-3}$, for otherwise the result is immediate on taking $F_\eps = 0$ and $M = 0$.
\end{proof}

\section{A dense model lemma}\label{app:dense-model}

\begin{lemma}[Separating hyperplane theorem]
Let $K \subset \R^n$ be closed and convex and $v \notin K$. Then there exists $\phi \in \R^n$ such that for all $u \in K$ we have $v \cdot \phi > u \cdot \phi$.
\end{lemma}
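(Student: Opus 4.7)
The plan is to exhibit $\phi$ as $v-p$, where $p$ is the closest point to $v$ in $K$. If $K=\emptyset$ the statement is vacuous (any $\phi$ works), so assume $K$ is non-empty. The main preliminary step is to produce such a $p$: picking any $u_0\in K$ and setting $R:=\|v-u_0\|$, the set $K\cap \overline{B}(v,R)$ is closed (intersection of two closed sets) and bounded, hence compact in $\R^n$. The continuous map $u\mapsto \|v-u\|$ attains its minimum on this compact set at some $p$, and by construction $\|v-p\|\leq \|v-u\|$ for every $u\in K$.

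Set $\phi:=v-p$; since $v\notin K$ and $p\in K$ we have $\phi\neq 0$. The heart of the argument is to show $\phi\cdot u\leq \phi\cdot p$ for every $u\in K$. Fix $u\in K$ and, by convexity, observe that $p+t(u-p)=(1-t)p+tu\in K$ for $t\in[0,1]$. Minimality gives
\[
\|v-p-t(u-p)\|^2 \geq \|v-p\|^2.
\]
Expanding the left-hand side yields $-2t\,\phi\cdot(u-p)+t^2\|u-p\|^2\geq 0$. For $t\in(0,1]$, divide by $t$ and send $t\to 0^+$ to obtain $\phi\cdot(u-p)\leq 0$, i.e.\ $\phi\cdot u\leq \phi\cdot p$.

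To finish, note that $\phi\cdot v-\phi\cdot p=\phi\cdot(v-p)=\|\phi\|^2>0$, so for any $u\in K$
\[
\phi\cdot v \;=\; \phi\cdot p + \|\phi\|^2 \;>\; \phi\cdot p \;\geq\; \phi\cdot u,
\]
which is the desired strict separation.

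The only subtlety (and the main obstacle if one tries to be slick) is justifying the existence of the nearest point $p$: the infimum of $\|v-u\|$ over $K$ is \emph{a priori} only an infimum, so one must first restrict to a bounded (and therefore compact) sub-level set before appealing to the extreme value theorem. Once $p$ exists, everything else is a direct computation with the expansion of $\|v-p-t(u-p)\|^2$ and a limit as $t\to 0^+$.
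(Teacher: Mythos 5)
Your proof is correct and complete. The paper itself does not prove this lemma---it simply refers the reader to the Wikipedia article on the hyperplane separation theorem---so you are filling in a genuine gap. Your argument is the standard nearest-point projection proof: reduce existence of a closest point $p\in K$ to $v$ by intersecting with a closed ball (so compactness applies), take $\phi = v-p$, and then use convexity of $K$ together with the first-order optimality condition (expand $\|v-p-t(u-p)\|^2 \geq \|v-p\|^2$ and let $t\to 0^+$) to get $\phi\cdot u \leq \phi\cdot p < \phi\cdot v$. You correctly flag the one place where care is needed: the infimum of distance to $K$ must be shown to be attained before anything else works, and restricting to a bounded sublevel set handles this. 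You also correctly dispose of the degenerate case $K=\emptyset$. Nothing is missing.
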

\begin{proof}
See\\
\url{https://en.wikipedia.org/wiki/Hyperplane_separation_theorem}.\end{proof}
\begin{lemma}\label{lem:dual}
For $f,\phi: [N] \to \R$ write
$$
\norm{f} := \bignorm{\hat{f}}_\infty \quad \text{and} \quad \norm{\phi}^* := \sup_{\norm{f} \leq 1} \biggabs{\sum_x f(x)\phi(x)}.
$$ 
Then for any $f,\phi, \psi: [N] \to \R$ we have
\begin{itemize}
\item $\abs{\sum_x f(x)\phi(x)} \leq \norm{f} \norm{\phi}^*$;
\item$\norm{\phi\psi}^* \leq\norm{\phi}^*\norm{\psi}^*$;
\item $  \norm{\phi}_\infty \leq \norm{\phi}^*$.
\end{itemize}
\end{lemma}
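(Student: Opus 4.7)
The plan is to verify the three bullets in turn; the first and third are immediate from the definitions, and the only substantive claim is the submultiplicativity in the second bullet.

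The first bullet follows by homogeneity. If $\norm{f} = 0$, then $\hat{f} \equiv 0$ forces $f \equiv 0$ and both sides vanish; otherwise $f/\norm{f}$ lies in the unit ball defining $\norm{\phi}^*$, and the defining supremum gives the inequality after rescaling.

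For the third bullet I will test against a signed delta. Pick $x_0 \in [N]$ with $|\phi(x_0)| = \norm{\phi}_\infty$ and set $f := \sgn(\phi(x_0))\cdot 1_{\set{x_0}}$. Then $\hat{f}(\alpha) = \sgn(\phi(x_0))\, e(\alpha x_0)$ has modulus one everywhere, so $\norm{f} = 1$, while $\sum_x f(x)\phi(x) = |\phi(x_0)| = \norm{\phi}_\infty$; the defining supremum for $\norm{\phi}^*$ is therefore at least $\norm{\phi}_\infty$.

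The submultiplicative bound $\norm{\phi\psi}^* \leq \norm{\phi}^*\norm{\psi}^*$ is the main content. The plan is to first establish the auxiliary inequality $\norm{f\phi} \leq \norm{f}\norm{\phi}^*$, after which two applications of the first bullet give $\abs{\sum_x f(x)\phi(x)\psi(x)} \leq \norm{f\phi}\norm{\psi}^* \leq \norm{f}\norm{\phi}^*\norm{\psi}^*$, and taking the supremum over $\norm{f} \leq 1$ concludes. The key observation for the auxiliary inequality is the \emph{modulation invariance} of $\norm{\cdot}$: if $h'(x) := h(x) e(\alpha x)$ then $\hat{h'}(\beta) = \hat{h}(\alpha + \beta)$, so $\norm{h'} = \norm{h}$. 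Setting $\phi_\alpha(x) := \phi(x)e(\alpha x)$ this gives $\norm{\phi_\alpha}^* = \norm{\phi}^*$, after unpacking definitions. Writing $\widehat{f\phi}(\alpha) = \sum_x f(x)\phi_\alpha(x)$ and invoking the first bullet then yields $|\widehat{f\phi}(\alpha)| \leq \norm{f}\norm{\phi_\alpha}^* = \norm{f}\norm{\phi}^*$ uniformly in $\alpha$, i.e.\ $\norm{f\phi} \leq \norm{f}\norm{\phi}^*$. The only subtlety is that $\phi_\alpha$ is complex-valued while the definitions are stated for real functions; this is handled either by extending $\norm{\cdot}$ and $\norm{\cdot}^*$ to complex-valued functions verbatim (all three inequalities continue to hold with identical proofs), or by phase-rotating $\widehat{f\phi}(\alpha)$ so that it is real and testing against the real-valued function $x \mapsto \phi(x)\cos(2\pi\alpha x - \theta)$, whose Fourier transform is a convex combination of translates of $\hat{h}$.
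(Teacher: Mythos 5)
Your proof is essentially the paper's: both establish the auxiliary bound $\norm{f\phi}\leq\norm{f}\norm{\phi}^*$ via modulation invariance of $\norm{\cdot}$, then apply the first bullet twice and take a supremum. The only cosmetic difference is where you attach the factor $e_\alpha$: the paper absorbs it into $f$, writing $\abs{\widehat{f\phi}(\alpha)}=\abs{\sum_x (fe_\alpha)(x)\phi(x)}\leq\norm{fe_\alpha}\norm{\phi}^*=\norm{f}\norm{\phi}^*$, whereas you absorb it into $\phi$, noting $\norm{\phi_\alpha}^*=\norm{\phi}^*$. Both incur the same wrinkle—a complex-valued function appears in a bullet stated only for real-valued functions—and you are actually more careful than the paper in flagging it; the cleanest repair is the one you mention first (extend both norms to complex-valued functions and observe that, for real $\phi$, the complex dual norm agrees with the real one, since one may rotate the optimal $f$ by a phase so that $\sum_x f\phi$ is real and then replace $f$ by $\Re f$, which satisfies $\norm{\Re f}\leq\norm{f}$). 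Your alternative cosine fix also works once the compressed phrase ``convex combination of translates'' is unpacked: setting $\tilde g(x):=g(x)\cos(2\pi(\alpha x-\theta))$, one has $\hat{\tilde g}(\beta)=\tfrac{1}{2}e(-\theta)\hat g(\beta+\alpha)+\tfrac{1}{2}e(\theta)\hat g(\beta-\alpha)$, so $\norm{\tilde g}\leq\norm{g}$ and the required bound $\norm{\phi\cos(\cdot)}^*\leq\norm{\phi}^*$ follows.
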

\begin{proof}
The first inequality follows from the definition of $\norm{\cdot}^*$. 

Let $e_\alpha$ denote the map $x \mapsto e(\alpha x)$. Then $\norm{\cdot}$ is invariant under multiplication by $e_\alpha$, so for any $f,\phi: [N] \to \R$ the first inequality gives that
$$
\abs{\widehat{f\phi}(\alpha)} = \abs{\sum_x f(x)e_\alpha(x) \phi(x)} \leq \norm{f e_\alpha} \norm{\phi}^* = \norm{f} \norm{\phi}^*.
$$
Hence
\[
\abs{\sum_x f(x)\phi(x)\psi(x)} \leq  \norm{f\phi}\norm{\psi}^* \leq \norm{f} \norm{\phi}^*\norm{\psi}^*.
\]
The second inequality follows.

Suppose that $\norm{\phi}_\infty = 1$, so that $|\phi(x)| = 1$ for some $x\in [N]$. Notice that the function $f := 1_{\set{x}}$ has Fourier transform bounded in magnitude by $1$. Therefore 
$$
\norm{\phi}^* \geq \abs{ \sum_y f(y) \phi(y)} = 1 = \norm{\phi}_\infty.
$$
The third inequality then follows on renormalising.
\end{proof}
\begin{proof}[Proof of Lemma \ref{lem:dense-model}]
We closely follow Conlon and Gowers \cite[Lemma 2.6]{ConlonGowersCombinatorial}. Notice that  
$$
(1+\tfrac{\eps}{2})^{-1}(f_1, \dots, f_r)
$$
 is not a member of the closed convex set
$$
\set{(g_1+h_1, \dots, g_r+h_r) : g_i \geq 0,\ g_1 + \dots + g_r \leq 1_{[N]},\ \bignorm{\hat{h}_i}_\infty \leq \trecip{4}\eps N}.
$$
Hence by the separating hyperplane theorem, there exists $(\phi_1, \dots , \phi_r)$ such that for any $g_i \geq 0$ with $g_1 + \dots + g_r \leq 1_{[N]}$ and $\bignorm{\hat{h}_i}_\infty \leq \trecip{4}\eps N$ we have
\begin{equation}\label{eq:MinMax}
\brac{1+\trecip{2} \eps}^{-1}\sum_i \sum_x f_i(x)\phi_i(x) >  \sum_i\sum_x g_i(x)\phi_i(x) +\sum_i\sum_x h_i(x)\phi_i(x).
\end{equation}
Taking all $g_i$ and $h_i$ zero shows that the left-hand side of \eqref{eq:MinMax} is positive, so we may renormalise $(\phi_1, \dots, \phi_r)$ to give
\begin{equation}\label{eq:MinMax3/2}
\sum_i \sum_x f_i(x)\phi_i(x) = (1+\trecip{2} \eps)N
\end{equation}
and for all $g_i \geq 0$ with $g_1 + \dots + g_r \leq 1_{[N]}$ and $\bignorm{\hat{h}_i}_\infty \leq \trecip{4}\eps N$ we have
\begin{equation}\label{eq:MinMax2}
\sum_i\sum_x g_i(x)\phi_i(x) +\sum_i\sum_x h_i(x)\phi_i(x) < N.
\end{equation}

Notice that
\begin{multline}\label{eq:MinMax4}
 \sum_i \sum_x f_i(x)\phi_i(x) \leq \sum_i \sum_x f_i(x)\max\set{\phi_1(x), \dots, \phi_r(x), 0}\\ \leq \sum_x \nu(x) \max\set{\phi_1(x), \dots, \phi_r(x), 0}.
\end{multline}
For each $x \in [N]$ fix $i(x) \in [r]$ such that $$\max\set{\phi_1(x), \dots, \phi_r(x)}=\phi_{i(x)}(x).$$
Define
$$
g_i(x) := \begin{cases} 1 & \text{if } i = i(x) \text{ and } \phi_i(x) \geq 0,\\
0 & \text{otherwise.}
\end{cases}
$$
By substituting this function into \eqref{eq:MinMax2} with $h_i = 0$, and using both  \eqref{eq:MinMax3/2} and \eqref{eq:MinMax4}, we deduce that
\begin{equation*}
 \sum_x \sqbrac{\nu(x)-1_{[N]}(x)} \max\set{\phi_1(x), \dots, \phi_r(x), 0}  > \tfrac{1}{2} \eps N .
\end{equation*}

Using the notation and content of Lemma \ref{lem:dual}, the inequality  gives that  
\begin{equation}\label{eq:phi-dual-bd}
\norm{\phi_i}_\infty \leq \norm{\phi_i}^* \leq 4/\eps.
\end{equation}
By the Stone--Weierstrass theorem\footnote{\url{https://en.wikipedia.org/wiki/Stone-Weierstrass_theorem}}, there exists a polynomial $P_\eps$ with degree and coefficients of size $O_{\eps,r}(1)$ such that for all $|x_i| \leq 4/\eps$ we have 
$$
\abs{\max\set{x_1, \dots, x_r, 0} - P_\eps(x_1, \dots, x_s)}\leq \eps/100.
$$
Notice that we may assume that $\sum_x \nu(x) \leq 2N$, otherwise we are done. Hence
\begin{equation*}
 \sum_x \sqbrac{\nu(x)-1_{[N]}(x)} P_\eps\brac{\phi_1(x), \dots, \phi_r(x)}  > \tfrac{1}{4} \eps N .
\end{equation*}
 Expanding the polynomial, and applying the pigeon-hole principle, there exist $\psi_1, \dots, \psi_R \in \set{\phi_1,\dots \phi_r}$ with $R \ll_{\eps,r} 1$ such that 
\begin{equation*}
 \abs{\sum_x \sqbrac{\nu(x)-1_{[N]}(x)}\psi_1(x)\dotsm\psi_R(x)}  \gg_{\eps,r} N.
\end{equation*}

Recalling \eqref{eq:phi-dual-bd} and Lemma \ref{lem:dual} we have $
\norm{\psi_1 \dotsm \psi_R}^* \ll_{\eps,r} 1.
$
Hence, again applying the first inequality in Lemma \ref{lem:dual}, we deduce \eqref{eq:non-unif}.
\end{proof}


\end{document}